\numberwithin{equation}{section}
\newtheorem{Theorem}{Theorem}
\newtheorem{Corollary}{Corollary}
\newtheorem*{Note*}{Note}
\newtheorem{Proposition}{Proposition}
\newtheorem{Remark}{Remark}
\newtheorem*{Recall*}{Recall}
\newcommand{\geqs}{\geqslant}
\newcommand{\leqs}{\leqslant}
\definecolor{ao(english)}{rgb}{0.0, 0.5, 0.0}
\newcommand{\W}{\widetilde{\bold{W}}}
\newcommand{\Z}{\widetilde{\bold{Z}}}
\newcommand{\F}{\widetilde{\bold{F}}}
\newcommand{\G}{\widetilde{\bold{G}}}
\newcommand{\LMat}{\widetilde{\bold{L}}}
\title{Exit Times for a Discrete Markov Additive Process}
\author{
        Zbigniew Palmowski$^{a,}$\footnote{Corresponding author e-mail address: zbigniew.palmowski@gmail.com}\,, \,  Lewis Ramsden$^{b,}$\footnote{Second author e-mail address: lewis.ramsden@york.ac.uk}\, and   Apostolos D. Papaioannou$^{c,}$\footnote{Third author  e-mail address: papaion@liverpool.ac.uk}     \\
      \\ $^a$Department of Applied Mathematics\\
       Wroc\l{}aw University of Science and Technology \\
       Wroc\l{}aw, Poland; \\
        $^b$ School for Business and Society, University of York\\
       York, Yorkshire, YO10 5DD, United Kingdom;\\
       $^c$Institute for Financial and Actuarial Mathematics \\
               Department of Mathematical Sciences\\
        University of Liverpool, L69 7ZL,  United Kingdom
       }
\begin{document}
\maketitle

\begin{abstract}
\noindent In this paper we consider (upward skip-free) discrete-time and discrete-space Markov additive chains (MACs) and develop the theory for the so-called $\W$ and $\Z$ scale matrices. which are shown to play a vital role in the determination of a number of exit problems and related fluctuation identities.  The theory developed in this fully discrete setup follows similar lines of reasoning as the analogous theory for Markov additive processes in continuous-time and is exploited to obtain the probabilistic construction of the scale matrices, identify the form of the generating function and produce a simple recursion relation for $\W$, as well as its connection with the so-called occupation mass formula. In addition to the standard one and two-sided exit problems (upwards and downwards), we also derive distributional characteristics for a number of quantities related to the one and two-sided `reflected' processes.

\end{abstract}

\noindent {\bf Keywords:} Markov Additive Process; Fluctuation Theory; Exit Problems; Discrete-Time; Scale Matrices; Random Walk.

\section{Introduction}

Exit problems for stochastic processes is a classic topic in applied probability and has received a great deal of attention within the literature. In the continuous setting (time and space), exit problems for so-called upward skip-free processes, known in the literature as `spectrally negative L\'evy processes', have been extensively considered in \cite{BERT} (Chapter VII), \cite{KYPR} (Chapter 8) and references therein, by means of fluctuation theory where semi-explicit expressions are derived in terms of the so-called `scale functions'. On the other hand, in the fully discrete setting exit problems for general discrete-time random walks are excellently treated in \cite{FEL} and \cite{MJ}, among others, by means of probabilistic arguments and include, as particular cases, the corresponding upward skip-free random walks. That is, a random walk for which downward jumps are unrestricted but upward jumps are constrained to a magnitude of at most one, emulating the upward `drift' in continuous-time. More recently, \cite{AV1} implement the ideas underlying the exit problems for continuous spectrally negative L\'evy processes for their discrete random walk counterparts and derive exit problems and other fluctuation identities in terms of analogous `discrete scale functions'.

A natural generalisation of the above processes are the broad family of Markov Additive Processes (MAPs), which incorporate an externally influencing Markov environment, providing greater flexibility to the characteristics of the underlying process in terms of its claim frequency and severity distributions, see \cite{ASM} (Chapter XI). Within this generalised framework, the existence of multidimensional scale functions, known as `scale matrices', was first discussed in \cite{KYPAL} and were used to derive fluctuation identities and first passage results for continuous-time MAPs. \cite{IVAN1} extended the initial findings of \cite{KYPAL} by providing the probabilistic construction of the scale matrices, identifying their transforms and considering an extensive study of exit problems including one-sided and two-sided exits, as well as exits for reflected processes via implementation of the occupation density formula. Further studies on MAPs and their exit/passage times can be found in \cite{BIKM}, \cite{BR}, \cite{DM}, among others. More recently, \cite{IVAN4}, derive and compare results for continuous-time MAPs with lattice (discrete-space) and non-lattice support. It is worth noting here that the authors in this work do discuss some of the corresponding results for the fully-discrete (time and space) MAP model considered in this paper, however, only a limited number of results are stated and a variety of important steps and proofs were omitted.

This paper bridges the gap between the aforementioned works and provides a  theoretical framework for fully discrete, upward skip-free MAPs, in terms of `discrete scale matrices', spelling out the differences in results, methodologies and necessary adjustments for deriving fluctuation identities between discrete and continuous MAPs. In particular, we derive results for the first passage theory, including one and two-sided exit problems as well as the under(over)-shoots upon exit via the associated `reflected' process. The motivation for deriving such a framework comes from the discrete set up having known advantages over the continuous-time models. For example, it is known that the Wiener-Hopf factorisation can be replaced by a simple Laurent series (see \cite{AV1}). Moreover, due to the equivalence between a discrete MAP and a Markov-modulated random walk, this paper provides a more flexible random walk model and enriches the numerous applications of random walk theory across a variety of disciplines.

The paper is organised as follows: In Section 2 we define the MAP in discrete time and space and derive the so-called occupation mass matrix formula, from which we obtain some useful identities to be used in the following sections. In Section 3, we introduce some fundamental matrices associated to the discrete MAP, identify the first of two discrete scale matrices and derive matrix expressions for the one and two-sided upward exit problem. In Section 4, we derive results for the corresponding one and two-sided reflected processes, including the over-shoot and under-shoot upon exit which are then used in Section 5 to derive expressions for the one and two-sided downward exit problems of the original (non-reflected) discrete MAP.

\section{Preliminaries} \label{Prem}

\noindent A fully discrete (time and space) MAP, which we will call a \textit{Markov Additive Chain} (MAC), is defined as a bivariate discrete-time Markov chain $(X,J) = \{(X_n, J_n)\}_{n \geqs 0}$, on the product space $\mathbb{Z}\times E$, where $X_n \in \mathbb{Z}$ describes the \textit{level} of the underlying process, whilst $J_n \in E = \left\{ 1, 2, \ldots, N \right\}$ describes the \textit{phase} of some external Markov chain (which affects the dynamics of $X_n$) having transition probability matrix $\bold{P}$, such that for $i,j \in E$, $\left( \bold{P} \right)_{ij} = \mathbb{P} \left( J_1 = j | J_{0} =i \right)$. It is assumed throughout this work, that the Markov chain $\{J_n\}_{n \geqs 0}$ is ergodic such that its stationary distribution $\boldsymbol{\pi}^\top = \left( \pi_1, \ldots, \pi_N\right)$ exists and is unique.
 The defining property of the MAC is the conditional independence and stationarity of law governing $X_n$, given $J_n$. That is, given that $\{ J_T = i \}$ for some fixed $T\in \mathbb{N}$, the Markov chain $\{ (X_{T+n} - X_T, J_{T+n}) \}$ is independent of $\mathcal{F}_T$ (the natural filtration to which the bivariate process $(X, J)$ is adapted) and $\{ (X_{T+n} - X_T, J_{T+n}) \} \overset{d}{=} \{ (X_n - X_0, J_n) \}$,
 given $\{J_0 = i\}$ for any phase state $i \in E$. This is known as the Markov additive property, a consequence of which is that the level process $\{X_n\}_{n \geqs 0}$ is translation invariant on the lattice.

Intuitively, the MAC is simply a Markov-modulated random walk where $\{X_n\}_{n \geqs 0}$ evolves according to the random walk
$X_n = Y_1 + Y_2 + \cdots + Y_n$, where $\{Y_k\}_{k \geqs 1}$ is a sequence of conditionally i.i.d.\,random variables with common, conditional distribution $\widetilde{q}_{ij}(y) = \mathbb{P}( Y_1 = y | J_1 = j, J_{0}=i),$ and thus, probability mass matrix $\widetilde{\bold{Q}}(y)$, with $i$,$j$-th element $\bigl(\widetilde{\bold{Q}}(y)\bigr)_{ij} = \widetilde{q}_{ij}(y)$. As such, and due to the invariance property, the transition probability matrix of $\left(X, J\right)$ has a block-like structure with blocks $\widetilde{\bold{A}}_m$ which represent the (one-step) transition matrix for an increase of $m$ levels in $\{X_n\}_{n \geqs 0}$ whilst capturing the phase transitions of $\{J_n\}_{n \geqs 0}$, such that
\begin{linenomath*}\begin{equation}
\label{Amat}
\widetilde{\bold{A}}_m = \widetilde{\bold{Q}}(m) \circ \bold{P},
\end{equation}\end{linenomath*}
where $\circ$ denotes entry-wise products (Hadamard multiplication).
In the remainder of this paper, we assume that $X=\{X_n\}_{n\geq 0 }$ may only increase by at most one level per unit time whilst experiencing downward jumps of arbitrary size. That is, for all $i,j \in E$, we have $\widetilde{q}_{ij}(m) = \mathbb{P}( Y_1 = m | J_1 = j, J_{0}=i) \geqs 0$ for $m \leqs 1$ and $\widetilde{q}_{ij}(m) = 0$ otherwise, which leads to $\widetilde{\bold{Q}}(m) = \bold{0}$ and thus $\widetilde{\bold{A}}_m = \bold{0}$ for $m = 2, 3, . \ldots$. In this sense, we say that $X$ possesses an `upward skip-free' property, an advantage of which is that the value of $X$ is known at stopping time corresponding to `upward' crossing/hitting of a given level (see below). This corresponds to the discrete analogue of a `spectrally negative' MAP in the continuous setting, which have important applications to workload and surplus processes in queuing and risk theory, respectively (see \cite{ASM} and \cite{RUIN} for more details).

\subsection{MAC Matrix Generator }
It has already been noted that the dynamics of the level process ($X$) within the MAC depends on the phase transitions of the external Markov chain ($J$). As such, the majority of quantities and results presented in this paper depend on the initial and final states of $\{J_n\}_{n \geqs 0}$ and thus, are given in matrix form. With this in mind, let us define the expectation matrix operator $\mathbb E_x(\cdot\,; J_n)$ which denotes an $N\times N$ matrix with $i,j$-th element $\left(\mathbb{E}_x\left(\cdot \, ; J_n\right)\right)_{ij}=\mathbb{E}\left( \cdot \,1_{( J_n = j)} | X_0 = x, J_0 = i  \right)$, where $1_{(\cdot)}$ represents the indicator function, and corresponding probability matrix $\mathbb{P}_x( \cdot \, , J_n)$ with elements $\left(\mathbb{P}_x( \cdot \, , J_n)\right)_{ij}=\mathbb{P}( \cdot \,, J_n = j | X_0 = x, J_0 = i)$.
Moreover, we denote $\mathbb{E}\left(\cdot \, ; J_n\right) \equiv \mathbb{E}_0\left(\cdot \, ; J_n\right)$, having associated probability measure $\mathbb{P}\left(\cdot \, ,J_n\right) \equiv \mathbb{P}_0\left(\cdot \, , J_n\right)$ and thus, we can define the probability generating matrix (p.g.m.)\,of the process $\{X_n\}_{n \geqs 0}$ with initial level $X_0=0$,  for at least $|z| \leqs 1$ and $z \neq 0$, by $\mathbb{E}\left( z^{-X_n}\, ; J_n \right)$, which satisfies
\begin{linenomath*}\begin{equation}\label{eqFMat}
 \mathbb{E}\left( z^{-X_n}\, ; J_n \right) = \bigl(\widetilde{\bold{F}}(z)\bigr)^n, \quad \widetilde{\bold{F}}(z) :=\mathbb{E}\left( z^{-X_1}\, ; J_1 \right) = \sum_{m=-1}^\infty z^{m}\widetilde{\bold{A}}_{-m},
\end{equation}\end{linenomath*}
and for $z=1$, we have $\widetilde{\bold{F}}(1) = {\bold{P}} = \sum_{m=-1}^\infty \widetilde{\bold{A}}_{-m} $.

\begin{Remark}
\label{rem1} Note that since the matrices $\widetilde{\bold{A}}_{-m}$ are probability transition matrices, such that $\widetilde{\bold{A}}_{-m} \geqs 0$ (non-negative),   it follows that for $z > 0$, the matrix $\widetilde{\bold{F}}(z)$ is also non-negative.  Hence, by the Perron-Frobenius theorem, $\widetilde{\bold{F}}(z)$ has a (simple) eigenvalue, denoted $\kappa(z)$, which is greater than or equal in absolute value than all other eigenvalues with corresponding left and right (column) eigenvectors, denoted $\vec{{\bold v}}(z)$ and $\vec{{\bold h}}(z)$, respectively, such that
$\vec{\bold{v}}(z)^\top \widetilde{\bold{F}}(z) = \kappa(z) \vec{\bold{v}}(z)^\top$ and $\widetilde{\bold{F}}(z) \vec{\bold h}(z) = \kappa(z) \vec{\bold h}(z)$. Moreover, since $\widetilde{\bold{F}}(1) = \bold{P}$ is a stochastic matrix, using standard facts from matrix analysis (see \cite{BIETAL}) we have $\kappa(1) = 1$ and it can be shown that $\kappa'(1)$ determines the asymptotic drift of the level process $\{X_n\}_{n \geqs 0}$ (see Section 1.3 in \cite{NEUT1} and \cite{GAIL}),  given by
\begin{linenomath*}\begin{align*}
\lim_{n \rightarrow \infty} \frac{X_n }{n} = - \kappa'(1) = \ -  \boldsymbol{\pi}^\top \sum_{m = -1}^\infty m\, \widetilde{\bold{A}}_{-m}\bold{e}.
\end{align*}\end{linenomath*}

\end{Remark}

\noindent Within the theory of continuous-time L\'evy processes, it is often desirable to analyse the process prior to some independent exponential `killing time' as this can emulate the role of Laplace transforms or generating functions within the calculations (see \cite{KYPR}). For a MAP, this exponential killing time can alternatively be incorporated via an enlargement to the state space of the Markov chain with the addition of an `absorbing' (killing) state and analysing the process prior to absorption (see \cite{IVAN1} for details).

In a similar way, let us enlarge the state space $E$ to $E \cup \{\dagger\}$, where $\dagger$ denotes an absorbing state, often called the \textit{cemetery} state, and we set $X=\partial$ whenever $J=\dagger$. Moreover, let us assume that the (one-step) `absorption' probability is the same from all states and denoted by $1-v = \mathbb{P}\left(J_1 = \dagger | J_0 = i \right)$, for all $i \in E$, such that the corresponding `non-absorption' probability (survival) is given by $v \in (0,1]$. Now, due to the addition of this cemetery state, it is clear that the probability transition matrix for transitions between the `transient' (when $v<1$) states of $E$ is dependent on $v$. Let us define this by $\bold{P}(v)\equiv v\bold{P}$, where $\bold{P}$ denotes the stochastic probability transition matrix defined in Section \ref{Prem}, in the absence of an absorbing state or `killing' ($v=1$). Hence, it follows that $\bold{P}(v)\vec{\bold e} = v\bold{P}\vec{\bold e} = v\vec{\bold e}$ and thus, for $v <1 $, $\bold{P}(v)$ is sub-stochastic and its Perron-Frobenius eigenvalue is less than 1 (see \cite{BIETAL}). Finally, it follows that the absorption or `killing' time of the Markov chain, denoted $g_v=\inf\{n> 0:J_n=\dagger\}$, is geometrically distributed with parameter $v\in (0,1]$ and we have
\begin{linenomath*}\begin{equation}\label{killingmainid}
\mathbb{E}\bigl( z^{-X_n}; n < g_{v} ,J_n \bigr) = {v}^n \widetilde{\bold{F}}(z)^n  = \bigl(v \widetilde{\bold{F}}(z)\bigr)^n = \bigl(\widetilde{\bold{F}}^{v}(z) \bigr)^n\end{equation}\end{linenomath*}
where $ \widetilde{\bold{F}}^{v}(z) := \mathbb{E}\bigl( z^{-X_1}; 1 < g_{v} ,J_1 \bigr) = {v}\widetilde{\bold{F}}(z)$ with $\widetilde{\bold{F}}(z)$ denoting the matrix generator of the MAC in the absence of killing, as defined as in Eq.\,\eqref{eqFMat}. The connection between the killed process and transforms/generating functions of the non-killed process is evident when we note that Eq.\,\eqref{killingmainid} is equivalent to $\mathbb{E}\bigl( v^n z^{-X_n}; J_n \bigr)$ for a `non-killed' MAC. Further advantages of working with the killed process are discussed in more details in later sections. Throughout the remainder of this paper, we generally suppress the explicit notation that absorption has not yet occurred but point out that it is assumed implicitly. As such, the results derived in the following are, in fact, much more general than they appear, with only a handful of these generalisations being stated explicitly.

\subsection{Occupation Times}

It is well known that occupation times and their densities play an important role within the theory of L\'evy processes and their fluctuations. In a continuous environment, the definition of the occupation density/time of a process at a given level has to be treated with some care and detail (see \cite{BERT}, \cite{IVAN1}) however, in the fully discrete model considered in this paper, the mathematical definition is intuitive.

Let us define by $\widetilde{L}(x, j, n)$, the \textit{occupation mass} denoting the number of periods the process $\{(X_n, J_n)\}_{n \geq 0}$ is in state $(x, j)\in \mathbb{Z} \times E$, up to and including time $n \geqs 0$, such that
\begin{linenomath*}\begin{equation}
\widetilde{L}(x, j, n) = \sum_{k = 0}^n 1_{\left( X_k = x, J_k = j \right)}.
\end{equation}\end{linenomath*}
Then, for some measurable non-negative function $f$, we have the so-called discrete \textit{occupation mass formula}
\begin{linenomath*}\begin{align}
\label{occdens}
\sum_{k=0}^n f\left(X_k\right) 1_{(J_k = j )} &= \sum_{x \in \mathbb{Z}} f\left(x\right) \widetilde{L}\left( x, j , n \right).
\end{align}\end{linenomath*}
From the above definition, it is clear that $\widetilde{L}(x, j, n)$ is a non-decreasing (monotone) process in $n \geqs 0$, which is adapted to the natural filtration $\mathcal{F}_n$. Therefore, if we further define the $N$-dimensional square occupation mass matrix, denoted $\widetilde{\bold{L}}(x, n)$, with $i,j$-th element given by $\bigl(\widetilde{\bold{L}}(x, n)\bigr)_{ij} = \mathbb{E}\bigl(  \widetilde{L}\left( x, j , n \bigr) \big| J_0 = i\right)$. Then, by application of the strong Markov property, analogously to Proposition 8 in \cite{IVAN1}, we have the following proposition.
\begin{Proposition}
\label{PropDens}
Let
\[\tau_x = \inf\{ n \geqs 0 : X_n = x \},\]
denote the first `hitting' time of the level $x\in \mathbb{Z}$. Then, for the occupation mass matrix  $\widetilde{\bold{L}}(x, n)$, it follows that
\begin{linenomath*}\begin{equation}
\widetilde{\bold{L}}(x, \infty) = \mathbb{P}\left( \tau_x < \infty, J_{\tau_x} \right) \widetilde{\bold{L}},
\end{equation}\end{linenomath*}
where $\left(\mathbb{P}\left( \tau_x < \infty, J_{\tau_x} \right)\right)_{ij}=\mathbb{P}\left( \tau_x<\infty, J_{\tau_x}=j|J_0=i \right)$ and $\widetilde{\bold{L}} := \widetilde{\bold{L}}(0, \infty)$ is the occupation mass matrix at the level $0$ over an infinite-time horizon, which has strictly positive entries.
\end{Proposition}

\begin{Remark} Let us point out some of the advantages of working with the killed process at this point:
\label{killing}
\begin{enumerate}[(i)]
\item If we include the implicit killing in the calculations explicitly, then for $v \in (0,1]$, the probability $ \mathbb{P}\left( \tau_x < \infty, J_{\tau_x} \right)$ becomes
\begin{linenomath*}\begin{align*}
\mathbb{P}\bigl( \tau_x < g_{v} , J_{\tau_x} \bigr)&=\sum_{n=0}^{\infty}\mathbb{P}(\tau_x = n, n <g_v, J_{n})=\sum_{n=0}^{\infty}v^n \mathbb{P}(\tau_x = n, J_n )= \mathbb{E}\bigl( {v}^{\tau_x};J_{\tau_x}\bigr),
\end{align*}\end{linenomath*}
where in the second equality we have used the fact that $\bold{P}(v) = v\bold{P}$. That is, the probability matrix $\mathbb{P}(\tau < \infty, J_{\tau_x})$ becomes the generator matrix $\mathbb{E}( {v}^{\tau_x};J_{\tau_x})$, if one imposes `killing' explicitly. As mentioned above, throughout this work we will keep killing implicit as it greatly simplifies the presentation but highlight that the above idea holds for all results.
\item Similarly, by superimposing killing in  Proposition \ref{PropDens}, we have that
\begin{linenomath*}\begin{equation*}
\widetilde{\bold{L}}_{v}(x, \infty) = \mathbb{E}\bigl({v}^{\tau_x}; J_{\tau_x} \bigr) \widetilde{\bold{L}}_{v},
\end{equation*}\end{linenomath*}
 with $\widetilde{\bold{L}}_{v} := \widetilde{\bold{L}}_{v}(0, \infty)$, such that the $i,j$-th element of $\widetilde{\bold{L}}_{v}(x, n)$ is given by
$\bigl(\widetilde{\bold{L}}_v(x, n)\bigr)_{ij} = \mathbb{E}\bigl( \widetilde{L}_v(x,j,n) \big| J_0=i\bigr)$,
where $\widetilde{L}_{v}(x,j,n) = \sum_{k=0}^n1_{\left(X_k =x, J_k=j, k<g_{v}  \right) }.$
Note that since $\bold{P}$ is sub-stochastic, then $\{X_k=x\}$ implies that $\{k<g_v\}$ and thus $\widetilde{L}_{v}(x,j,n)$ coincides with $\widetilde{L}(x,j,n)$.
\end{enumerate}
\end{Remark}


\noindent The main reason for introducing the theory of occupation times and their associated mass matrices, is due to their relationship with the one step p.g.m., namely $\widetilde{\bold{F}}(z)$. This connection is highlighted in the following auxiliary theorem which provides the foundations for many of the results in the following sections.
\begin{Theorem}
\label{TheoremOcc}
For all $z \in (0,1]$ such that $\bold{I} - \widetilde{\bold{F}}(z)$ is non-singular,
it follows that
\begin{linenomath*}\begin{equation}
\label{eqoccrel}
\sum_{x \in \mathbb{Z}} z^{-x} \mathbb{P}\left(\tau_x < \infty,  J_{\tau_x} \right)\widetilde{\bold{L}} = \bigl(\bold{I} - \widetilde{\bold{F}}(z) \bigr) ^{-1},
\end{equation}\end{linenomath*}
where $\tau_x$ is the first hitting time of the level $x \in \mathbb{Z}$.
\end{Theorem}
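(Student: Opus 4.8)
The plan is to unfold the occupation mass matrix as a time series of $n$-step transition matrices, recognise the resulting (matrix) generating function via Proposition~\ref{PropPGF}, and then sum the Neumann series of $\widetilde{\bold{F}}(z)$.

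First I would rewrite the left-hand side of \eqref{eqoccrel} in terms of transition probabilities. From the definition \eqref{eqOccDenMat} of $\widetilde{\bold{L}}(x,n)$ and the occupation mass formula \eqref{occdens}, the $(i,j)$-th entry of $\widetilde{\bold{L}}(x,\infty)$ is $\sum_{k\geqs 0}\mathbb{P}\left(X_k=x,J_k=j\mid J_0=i\right)$, so in matrix notation $\widetilde{\bold{L}}(x,\infty)=\sum_{k=0}^\infty \mathbb{P}\left(X_k=x,J_k\right)$. By Proposition~\ref{PropDens} this equals $\mathbb{P}\left(J_{\tau^{\{x\}}}\right)\widetilde{\bold{L}}$, and hence
\begin{equation*}
\sum_{x\in\mathbb{Z}} z^{-x}\,\mathbb{P}\left(J_{\tau^{\{x\}}}\right)\widetilde{\bold{L}} \;=\; \sum_{x\in\mathbb{Z}}\sum_{k=0}^\infty z^{-x}\,\mathbb{P}\left(X_k=x,J_k\right).
\end{equation*}
Since $z\in(0,1]$ forces $z^{-x}>0$ for every $x\in\mathbb{Z}$ and every matrix $\mathbb{P}\left(X_k=x,J_k\right)$ has non-negative entries, Tonelli's theorem justifies interchanging the two entry-wise sums, giving
\begin{equation*}
\sum_{x\in\mathbb{Z}} z^{-x}\,\mathbb{P}\left(J_{\tau^{\{x\}}}\right)\widetilde{\bold{L}} \;=\; \sum_{k=0}^\infty \sum_{x\in\mathbb{Z}} z^{-x}\,\mathbb{P}\left(X_k=x,J_k\right) \;=\; \sum_{k=0}^\infty \mathbb{E}\left(z^{-X_k};J_k\right),
\end{equation*}
the last equality being the definition of the p.g.f.\ matrix. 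Applying Proposition~\ref{PropPGF} (in its implicitly killed form \eqref{killingmainid}), $\mathbb{E}\left(z^{-X_k};J_k\right)=\widetilde{\bold{F}}(z)^k$, so the left-hand side of \eqref{eqoccrel} equals $\sum_{k=0}^\infty \widetilde{\bold{F}}(z)^k$.

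It then remains to identify $\sum_{k=0}^\infty \widetilde{\bold{F}}(z)^k$ with $\bigl(\bold{I}-\widetilde{\bold{F}}(z)\bigr)^{-1}$. Writing $\bold{S}_n=\sum_{k=0}^n\widetilde{\bold{F}}(z)^k$, the telescoping identity $\bigl(\bold{I}-\widetilde{\bold{F}}(z)\bigr)\bold{S}_n=\bold{I}-\widetilde{\bold{F}}(z)^{n+1}$ holds for every $n$. Because $\widetilde{\bold{F}}(z)\geqs \bold{0}$, the partial sums $\bold{S}_n$ increase entry-wise in $n$; by the preceding Tonelli step they increase to the finite matrix $\sum_{x}z^{-x}\,\mathbb{P}\left(J_{\tau^{\{x\}}}\right)\widetilde{\bold{L}}=:\bold{S}$, whence $\widetilde{\bold{F}}(z)^{n+1}=\bold{S}_{n+1}-\bold{S}_n\to\bold{0}$. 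Passing to the limit in the telescoping identity gives $\bigl(\bold{I}-\widetilde{\bold{F}}(z)\bigr)\bold{S}=\bold{I}$, and the assumed non-singularity of $\bold{I}-\widetilde{\bold{F}}(z)$ then yields $\bold{S}=\bigl(\bold{I}-\widetilde{\bold{F}}(z)\bigr)^{-1}$, as claimed.

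The main obstacle is the convergence issue buried in the last paragraph: one needs the series $\sum_{k\geqs 0}\widetilde{\bold{F}}(z)^k$ to converge, equivalently that $\widetilde{\bold{F}}(z)$ has spectral radius strictly less than one. This is exactly where the standing implicit geometric killing ($v<1$, so that $\widetilde{\bold{F}}(z)$ is really the sub-stochastic object $v\widetilde{\bold{F}}(z)$ of \eqref{killingmainid}) together with the non-singularity hypothesis on $\bold{I}-\widetilde{\bold{F}}(z)$ enters — without it, both the occupation generating function on the left and the Neumann series on the right can diverge. Everything else is routine: a non-negativity/Tonelli rearrangement (for which $z>0$ is essential) and a direct appeal to Proposition~\ref{PropPGF}.
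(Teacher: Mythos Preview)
Your argument is correct and follows essentially the same route as the paper: both proofs use the occupation mass formula together with non-negativity (the paper phrases it as monotone convergence, you as Tonelli) to identify the left-hand side with $\sum_{k\geqs 0}\widetilde{\bold{F}}(z)^k$, then invoke Proposition~\ref{PropPGF} and Proposition~\ref{PropDens} exactly as you do. The only substantive difference is in the treatment of convergence at the end: the paper first restricts to $z$ with Perron--Frobenius eigenvalue $\kappa(z)<1$ (where the Neumann series converges outright) and then extends to all $z\in(0,1]$ with $\bold{I}-\widetilde{\bold{F}}(z)$ non-singular by analytic continuation, whereas you lean on the implicit killing to force the spectral radius below one; your telescoping step assumes finiteness of $\bold S$, which is precisely what the convergence hypothesis supplies, so be careful not to phrase it as though Tonelli alone yields finiteness.
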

\begin{proof}
First note by the occupation mass  formula, that for any $j\in E$, we have
\begin{linenomath*}\begin{equation*}
\sum_{k=0}^{n} z^{-X_k}1_{(J_k=j)}=\sum_{x \in \mathbb{Z}} z^{-x}\widetilde{L}(x,j,n).
\end{equation*}\end{linenomath*}
Taking expectations in the above equation and considering the limit as $n \rightarrow \infty$, yields
\begin{linenomath*}\begin{equation*}
\lim_{n\rightarrow \infty}\sum_{k =0}^n \mathbb{E} \left(  z^{-X_k}1_{(J_k = j)} \big| J_0 = i \right)=\lim_{n \rightarrow \infty} \sum_{x \in \mathbb{Z}} z^{-x}\mathbb{E}\bigl(\widetilde{L}(x,j,n)\big| J_0 = i \bigr),
\end{equation*}\end{linenomath*}
from which, since $z^{-x}$ is non-negative for $z > 0$, we can apply the monotone convergence theorem to obtain
\begin{linenomath*}\begin{equation*}
\sum_{k =0}^\infty \mathbb{E} \left(  z^{-X_k}1_{(J_k = j)} \big| J_0 = i \right)=\sum_{x \in \mathbb{Z}} z^{-x}\mathbb{E}\bigl(\widetilde{L}(x,j,\infty)\big| J_0 = i \bigr).
\end{equation*}\end{linenomath*}
Equivalently, in matrix form the above expression can be written as
\begin{linenomath*}\begin{align}
\label{eqCONV}
\sum_{k =0}^\infty \widetilde{\bold{F}}(z)^k =\sum_{x \in \mathbb{Z}} z^{-x}\widetilde{\bold{L}}(x, \infty) =\sum_{x \in \mathbb{Z}} z^{-x}\mathbb{P}\left(\tau_x < \infty, J_{\tau_x} \right)\widetilde{\bold{L}}
\end{align}\end{linenomath*}
where the last equality comes from the result of Proposition \ref{PropDens}. Finally, we note that the geometric series on the l.h.s.\,converges to $(\bold{I}-\widetilde{\bold{F}}(z))^{-1}$ as long as $\kappa(z) < 1$ and the result follows using analytic continuation to extend the domain of convergence to all $z \in (0,1]$ such that $(\bold{I}-\widetilde{\bold{F}}(z))^{-1}$ exists. \end{proof}

\begin{Remark}
\label{RemInvert}
Note that the result of Theorem \ref{TheoremOcc} holds in the presence of killing ($v < 1$), since $\bold{P}(v)$ is sub-stochastic and thus $\kappa^v(1)<1$, where $\kappa^v(z)$ is the Perron-Frobenius eigenvalue of $\widetilde{\bold{F}}^v(z)$. Hence, by continuity of $\kappa^v(z)$, there exists a small interval around $z =1$ for which $\kappa^v(z) < 1$. In addition, $\widetilde{\bold{L}}$ must have finite entries as under killing the Markov chain is transient and the expected number of visits to any state is finite.
\end{Remark}



\section{Upward Exit Problems}

In this section we discuss and derive results on exit problems for upward skip-free MACs above and below a fixed level or strip. In the first instance, we will utilise the upward skip-free property of the level process, $\{X_n\}_{n \geqs 0}$, to determine expressions for upward exit times (one and two-sided), then extend the theory to consider downward exit problems. These expressions are given in terms of so-called \textit{fundamental} and \textit{scale matrices} associated to the MAC, where the existence of the latter were first discussed in \cite{KYPAL} and extend the notion of scale functions associated to L\'evy processes (see \cite{KYPR} and \cite{AV1} for more details).

All the results given in this section are stated from an initial level $X_0 = 0 $ which, due to the invariance property, can be generalised to an arbitrary level, say $x_0 \in \mathbb{Z}$, via an appropriate shift.

Let us denote by $\tau_x^{\pm}$, the first time the level process $\{X_n\}_{n \geqs 0}$ up(down)-crosses the level $x \in \mathbb{Z}$, such that
\begin{linenomath*}\begin{equation}
\label{crossingtime}
\tau_x^{+} = \inf \{ n \geqs 0: X_n \geqs x\} \quad \text{and} \quad \tau_x^{-} = \inf \{ n \geqs 0: X_n \leqs x\}.
\end{equation}\end{linenomath*}
We note that in a `spectrally negative' MAC with upward drift of one per unit time, for $x \geqs X_0$ the random stopping times $\tau^+_x$ (crossing time) and $\tau_x$ (hitting time)  coincide. Moreover, we have that $X_{\tau^+_x} = X_{\tau_x} = x$.

\subsection{One-Sided Exit Upward}
\label{oneside}
The key observation for the first passage upwards, is that the stationary and independent
increments as well as the skip-free property provide an embedded Markov structure. To see this, recall that $X_{\tau_1^+} = X_{\tau_1} = 1$, which together with the strong Markov and Markov additive properties, imply that the process $\{J_{\tau_n}\}_{n \geqs 0}$ is a (time-homogeneous) discrete-time Markov chain, given $X_0 = 0$, with some probability transition matrix $\widetilde{\bold{G}}$, such that for $a\geq0$,
\begin{linenomath*}\begin{equation}
\label{GMatrixGen}
\mathbb{P}\left(\tau_a < \infty, J_{\tau_a} \right) = \widetilde{\bold{G}}^a, \quad \widetilde{\bold{G}} = \mathbb{P}\left(\tau_1< \infty , J_{\tau_1}\right),
\end{equation}\end{linenomath*}
with $i,j$-th element given by $(\widetilde{\bold{G}})_{ij} = \mathbb{P}\left(\tau_1 < \infty, J_{\tau_1} = j \mid J_0 = i\right)$ for $i,j \in E$.
\begin{Remark} \label{RemG} In the case of no killing, i.e., $v=1$ and $\kappa'(1) \leqs 0$ (non-negative drift), the matrix $\widetilde{\bold{G}}$ is a stochastic matrix and sub-stochastic otherwise.
\end{Remark}

\noindent The transition probability matrix $\widetilde{\bold{G}}$ is widely known as the \textit{fundamental matrix} of the MAC and contains the probabilistic characteristics to determine upward passage times and the corresponding phase state at passage. That is, determining the matrix $\widetilde{\bold{G}}$ provides the probability of hitting any upper level $a \geqs 0$ and the phase of $\{J_n\}_{n \geqs 0}$ at this hitting time.

The matrix $\widetilde{\bold{G}} $ has a long history in the theory of structured stochastic matrices (see for e.g., Lemma 4.2 in \cite{BIETAL}) and can be computed by conditioning on the first time period, i.e.,
\begin{linenomath*}\begin{align*}
\widetilde{\bold{G}} = \mathbb{P}\left(\tau_1< \infty, J_{\tau_{ 1 }}\right)
&= \sum_{m=-1}^\infty \widetilde{\bold{A}}_{-m} \widetilde{\bold{G}}^{m+1} = \Bigl( \sum_{m=-1}^\infty \widetilde{\bold{A}}_{-m} \widetilde{\bold{G}}^m \Bigr)\widetilde{\bold{G}}.
\end{align*}\end{linenomath*}
Multiplying on the right by $\widetilde{\bold{G}}^{-1}$, assuming it exists (see Remark  \ref{Reminv}), and using the definition of $\widetilde{\bold{F}}(z)$ given in Eq.\,\eqref{eqFMat}, it follows that the fundamental matrix, $\widetilde{\bold{G}}$, is the right solution of $\widetilde{\bold{F}}(\cdot) = \bold{I}$, which is a well known equation established in \cite{NEUT} and further studied in \cite{BIETAL}, \cite{NEUT1},  \cite{GAIL} and   \cite{GAIL1}, among others.

\begin{Remark}\label{remkil}
Let us discuss a few important observations about the fundamental matrix $\widetilde{\bold{G}}$ and its significance within applied probability:	
\begin{enumerate}[(i)]
\item
For the continuous-time (scalar) spectrally negative L\'evy process, the fundamental matrix $\widetilde{\bold{G}}$, corresponds to the of inverse Laplace exponent at zero, namely $\Phi(0)$, i.e., the solution to $\psi(\beta) = 0$, where $\psi(\beta)$ denotes the Laplace exponent of the L\'evy process (see \cite{KYPR}).
\item It follows by definition that $\mathbb{E}\bigl(\widetilde{\bold{G}}^{-X_n}; J_n \bigr)$ is a martingale. In fact, it is clear that in the matrix setting, there exists another solution (left solution) to $\widetilde{\bold{F}}(\cdot) = \bold{I}$, say $\widetilde{\bold{R}}$, which would also result in the martingale $\mathbb{E}\bigl(\widetilde{\bold{R}}^{-X_n}; J_n \bigr)$. It turns out that the matrix $\widetilde{\bold{R}}$ is actually the counterpart of $\widetilde{\bold{G}}$ for the `time-reversed MAC' and is considered another \textit{fundamental matrix}. The time-reversed MAP and the corresponding matrix $\widetilde{\bold{R}}$ are considered in \cite{IVAN2} for the continuous-time (lattice) case and we direct the reader to this paper for more details.

\item Superimposing killing in the above produces the transform of the first passage time, namely \,$\mathbb{E}\left( {v}^{\tau_a} ; J_{\tau_a} \right)$, such that
\begin{linenomath*}\begin{equation}
\label{GvMatrix}
\mathbb{E}\bigl( {v}^{\tau_a} ; J_{\tau_a} \bigr) = \widetilde{\bold{G}}_{v}^{a}, \quad \widetilde{\bold{G}}_{v} = \mathbb{E}\bigl( {v}^{\tau_1} ; J_{\tau_1} \bigr),
\end{equation}\end{linenomath*}
 and   $\widetilde{\bold{G}}_{v}$ is the right solution of $\widetilde{\bold{F}}(\cdot) = {v}^{-1}\bold{I}$.
 \item As discussed in \cite{IVAN2}, the right solutions of the above equations cannot be determined analytically except in some special cases. However, there exists a number of numerical algorithms which can be employed, e.g., the iterative algorithm \cite{NEUT}, logarithmic reduction \cite{LRAM} and the cyclic reduction \cite{BIME}, to name a few. For further details on the variety of algorithms available for solving such equations, see \cite{BIETAL} and references therein.
\end{enumerate}
\end{Remark}

\noindent

 \subsection{Two-Sided Exit Upward - $\{ \tau_a^+ < \tau_{-b}^-\}$ }

Within the literature of spectrally negative L\'evy processes and their fully discrete counterparts \cite{AV1}, the common approach to solving two-sided exit problems relies on the introduction of a family of functions, $W^q$ and $Z^q$, known as the $q$-scale functions (see \cite{KYPR} for details). The extension of these auxiliary, one dimensional scale functions to the multidimensional MAP setting was first proposed in \cite{KYPAL}, where the existence of the corresponding `scale matrices' was shown and were further investigated in \cite{IVAN1} who derived their probabilistic interpretation within the continuous setting.

For ${v} \in (0, 1]$,
 the discrete $\W_{v}$ scale matrix is defined as the mapping $\W_{v} : \mathbb{N} \rightarrow \mathbb{R}^{N \times N}$, with $\W_{v}(0) = \bold{0}$ (the matrix of zeros), such that
\begin{linenomath*}\begin{equation}
\label{eqW1}
\W_{v}(n) = \Bigl[\widetilde{\bold{G}}_{v}^{-n} - \mathbb{E}\bigl({v}^{\tau_{-n}}; J_{\tau_{-n}} \bigr) \Bigr]\widetilde{\bold{L}}_{v},
\end{equation}\end{linenomath*}
where we write $\W_1(n) =: \W(n)$ for the $1$-scale matrix. The definition of the scale matrix above is only unique up to a multiplicative constant and the presence of the infinite-time occupation matrix, $\widetilde{\bold{L}}_{v}$, is somewhat arbitrary here but is included in order to obtain the most concise form for the p.g.m.\,of $\W_{v}(\cdot)$, which is derived in Theorem \ref{ThmTwoSideUp} (see also \cite{IVAN1}).

In the two-sided exit problem, we are interested in the time of exiting a (fixed) `strip', $[-b,a]$, consisting of an upper and lower level denoted by $a$ and $-b$, respectively, such that $a > 0 > -b$. More formally, we are interested in the events $\{ \tau_a^+ < \tau_{-b}^-\,\}$ and $\{ \tau_a^+ > \tau_{-b}^-\,\}$, which correspond to the upward and downward exits from the strip $[-b,a]$, respectively. In this section, we are concerned with the former (upward exit). The the latter (downward exit) will be discussed in a later section as its derivation depends on alternative methods.

Let us denote by $\rho(\cdot)$, the spectral radius of a matrix. That is, if $\Lambda(\bold{A})$ denotes the spectrum of  a matrix $\bold{A}$, then $\rho\left(\bold{A}\right) = \max\{ |\lambda_i|: \lambda_i \in \Lambda(\bold{A})\}$.

\subsubsection{Two-sided exit theory for non-singular $\widetilde{\bold{A}}_1$}
 \begin{Theorem}
 \label{ThmTwoSideUp}
Assume that $\widetilde{\bold{A}}_1$ is non-singular. Then, there exists a  matrix $\widetilde{\bold{W}}: \mathbb{N} \rightarrow \mathbb{R}^{N\times N}$ with $\widetilde{\bold{W}}(0) = \bold{0}$, which is invertible and satisfies
\begin{linenomath*}\begin{equation}\label{twoexitprob}
\mathbb{P}\bigl( \tau^+_a < \tau^-_{-b}\,, J_{\tau^+_a} \bigr) = \widetilde{\bold{W}}(b)\widetilde{\bold{W}}(a+b)^{-1},
\end{equation}\end{linenomath*}
where $\bigl(\mathbb{P}\bigl( \tau^+_a < \tau^-_{-b}\,, J_{\tau^+_a} \bigr)\bigr)_{ij}=\mathbb{P}\bigl( \tau^+_a < \tau^-_{-b}\,, J_{\tau^+_a}=j|J_0=i \bigr)$ and
$\widetilde{\bold{W}}(\cdot)$ has representation
\begin{linenomath*}\begin{equation}
\label{ScaleMat1}
\widetilde{\bold{W}}(n)=\left(\widetilde{\bold{G}}^{-n} - \mathbb{P}\left(\tau_{-n}< \infty, J_{\tau_{-n}} \right)\right)\widetilde{\bold{L}}.
\end{equation}\end{linenomath*}
Furthermore, it holds that
\begin{linenomath*}\begin{equation}
\label{eqWTran}
\sum_{n=0}^\infty z^n \widetilde{\bold{W}}(n) = \Bigl( \widetilde{\bold{F}}(z) - \bold{I}  \Bigr)^{-1},
\end{equation}\end{linenomath*}
for  $z \in (0, 1]$ such that $z\notin \Lambda(\widetilde{\bold{G}})$, and
\begin{linenomath*}\begin{equation}\label{wld}
\widetilde{\bold{W}}(n) = \widetilde{\bold{G}}^{-n} \widetilde{\bold{L}}^+(n),
\end{equation}\end{linenomath*}
where $\widetilde{\bold{L}}^+(n) := \mathbb{E}\left[\widetilde{\bold{L}}\left(0, \tau_{n}\right)\right]$, denotes the expected number of times the process visits 0 before hitting level $n \in \mathbb{N}^+$.
\end{Theorem}

\begin{proof}
Following the same line of logic  as in \cite{IVAN1}, we note that  the events $\{ \tau_a^+ < \tau_{-b}^-\}$ and $\{ \tau_{ a}< \tau_{- b}\}$ are equivalent due to the upward skip-free property of $\{X_n\}_{n \geqs 0}$. This follows from the fact that in order to drop below $-b$ and then hit $a$, the process must visit $-b$ on the way. Thus, conditioning on possible events and employing the Markov additive property, we obtain
\begin{linenomath*}\begin{equation*}
\mathbb{P}\bigl(\tau_a < \infty, J_{\tau_{a}}  \bigr) = \mathbb{P}\bigl(\tau_{ a}< \tau_{ -b} ,J_{\tau_{a}}  \bigr) +  \mathbb{P}\bigl(\tau_{ a}> \tau_{ -b} , J_{\tau_{-b}} \bigr)\mathbb{P}\left( \tau_{a+b}< \infty, J_{\tau_{a+b}} \right)
\end{equation*}\end{linenomath*}
and
\begin{linenomath*}\begin{equation*}
\label{eq2}
\mathbb{P}\bigl( \tau_{ -b} < \infty, J_{\tau_{-b}} \bigr) = \mathbb{P}\bigl(\tau_{ a}> \tau_{ -b} , J_{\tau_{-b}}  \bigr) +  \mathbb{P}\bigl(\tau_{ a}< \tau_{ -b} ,J_{\tau_{a}} \bigr)\mathbb{P}\bigl(\tau_{-(a+b)} < \infty, J_{\tau_{-(a+b)}} \bigr).
\end{equation*}\end{linenomath*}
Now, by recalling that   $\mathbb{P}\bigl(\tau_a < \infty, J_{\tau_{a}}  \bigr)=\widetilde{\bold{G}}^a$, solving the second equation w.r.t.\,$\mathbb{P}\bigl(\tau_{ a}> \tau_{ -b} , J_{\tau_{-b}}  \bigr) $ and substituting the resulting equation into the first, yields
\begin{linenomath*}\begin{equation}
		\label{eqV1}
\mathbb{P}\bigl(\tau_{ a}< \tau_{ -b} , J_{\tau_{a}} \bigr)\Bigl[\mathbb{P}\left(\tau_{-(a+b)}< \infty, J_{\tau_{-(a+b)}} \right)\widetilde{\bold{G}}^{a+b} - \bold{I}\Bigr] = \mathbb{P}\left(\tau_{-b}< \infty, J_{\tau_{-b}} \right)\widetilde{\bold{G}}^{a+b} -\widetilde{\bold{G}}^{a}.
\end{equation}\end{linenomath*}
Finally, by multiplying through by $-\widetilde{\bold{G}}^{-(a+b)}$ on the right, we have
\begin{linenomath*}\begin{equation*}
 \mathbb{P}\bigl(\tau_{ a}< \tau_{ -b} , J_{\tau_{a}} \bigr)\left[ \widetilde{\bold{G}}^{-(a+b)} -\mathbb{P}\left(\tau_{-(a+b)}< \infty, J_{\tau_{-(a+b)}} \right) \right]= \widetilde{\bold{G}}^{-b} - \mathbb{P}\bigl(\tau_{-b}<\infty, J_{\tau_{-b}} \bigr),
\end{equation*}\end{linenomath*}
or equivalently
\begin{linenomath*}\begin{equation*}
\mathbb{P}\bigl(\tau_{ a}< \tau_{ -b} , J_{\tau_{a}} \bigr) = \widetilde{\bold{W}}(b) \widetilde{\bold{W}}(a+b)^{-1},
\end{equation*}\end{linenomath*}
given that  $\widetilde{\bold{W}}(\cdot)^{-1}$ exists (see Remark \ref{Reminv}). Note that the above result is derived in the absence of the occupation mass matrix, $\widetilde{\bold{L}}$, within the definition of $\W(n)$, reinforcing the point that the scale matrix is uniquely defined up to a (matrix) multiplicative constant. The choice for including $\widetilde{\bold{L}}$ in the definition of $\W(n)$, which is only well defined as long as $\widetilde{\bold{L}}$ has finite entries (see Remark \ref{RemInvert} for conditions), will become apparent in the following.

To prove Eq.\,\eqref{eqWTran}, let us take the transform of the scale matrix and recall the definition given in Eq.\,\eqref{ScaleMat1}, to obtain
\begin{linenomath*}\begin{equation}
\label{Wpgf}
\sum_{n=0}^\infty z^n \widetilde{\bold{W}}(n) = \sum_{n=0}^\infty z^n \widetilde{\bold{G}}^{-n}\widetilde{\bold{L}}- \sum_{n=0}^\infty z^n \mathbb{P}\left(\tau_{-n}<\infty, J_{\tau_{-n}} \right)\widetilde{\bold{L}},
\end{equation}\end{linenomath*}
where the first term on the r.h.s.\,satisfies
 \begin{linenomath*}\begin{equation}
\label{Wpgf1}
\sum_{n=0}^\infty z^n \widetilde{\bold{G}}^{-n}\widetilde{\bold{L}} = \sum_{n=0}^\infty \Bigl(z \widetilde{\bold{G}}^{-1}\Bigr)^n \widetilde{\bold{L}}= \left( \bold{I} - z \widetilde{\bold{G}}^{-1} \right)^{-1}\widetilde{\bold{L}},
\end{equation}\end{linenomath*}
 for all $z \in (0, \gamma)$, where $\gamma := \min\{|\lambda_i| : \lambda_i \in \Lambda(\widetilde{\bold{G}})\}$.

For the second term of Eq.\,\eqref{Wpgf}, under the conditions of  Theorem \ref{TheoremOcc}, we have
\begin{linenomath*}\begin{align*}
\bigl(\bold{I} - \widetilde{\bold{F}}(z) \bigr) ^{-1} &= \sum_{n=0}^\infty z^n \mathbb{P}\bigl(\tau_{-n} < \infty, J_{\tau_{-n}} \bigr)\widetilde{\bold{L}}+ \sum_{n=1}^\infty z^{-n} \mathbb{P}\bigl( \tau_{n}< \infty, J_{\tau_{n}} \bigr)\widetilde{\bold{L}}\\
&= \sum_{n=0}^\infty z^n \mathbb{P}\bigl(\tau_{-n}<\infty, J_{\tau_{-n}} \bigr)\widetilde{\bold{L}}+ \sum_{n=1}^\infty z^{-n} \widetilde{\bold{G}}^n\widetilde{\bold{L}}
\end{align*}\end{linenomath*}
for all $z \in (0,1]$ such that $\bigl(\bold{I} - \widetilde{\bold{F}}(z) \bigr) ^{-1}$ exists.
Moreover, for $z \in (\rho(\widetilde{\bold{G}}), 1]$ ($\rho(\widetilde{\bold{G}}) < 1 $  is true
as long as $\widetilde{\bold{G}}$ is invertible and this follows from the assumption that the matrix $\widetilde{\bold{A}}_1$ is non-singular, see also  Remark \ref{Reminv} ),
the geometric series on the r.h.s.\,converges and the above equation can be re-written as

\begin{linenomath*}\begin{eqnarray}
\label{ap4}
\bigl(\bold{I} - \widetilde{\bold{F}}(z) \bigr) ^{-1}  &=& \sum_{n=0}^\infty z^n \mathbb{P}\bigl( \tau_{-n}<\infty, J_{\tau_{-n}} \bigr)\widetilde{\bold{L}}+\Bigl( \bigl(\bold{I}-z^{-1}\widetilde{\bold{G}}\bigl)^{-1}-\bold{I} \Bigr)\widetilde{\bold{L}}\notag \\
      &=& \sum_{n=0}^\infty z^n \mathbb{P}\bigl(\tau_{ -n}< \infty, J_{\tau_{-n}} \bigr)\widetilde{\bold{L}}-\bigl( \bold{I} - z \widetilde{\bold{G}}^{-1} \bigr)^{-1}\widetilde{\bold{L}},
\end{eqnarray}\end{linenomath*}
once we prove a common domain of convergence, i.e., $\bigl(\bold{I} - \widetilde{\bold{F}}(z) \bigr) ^{-1}$ exists for some $z \in (\rho(\widetilde{\bold{G}}), 1]$. In fact, for $\rho(\widetilde{\bold{G}}) < 1 $, see  Lemma 4 in  \cite{BIKM}, it can be shown that the zeros of $\det[\bold{I} - \widetilde{\bold{F}}(z)]$ coincide with the eigenvalues of $\widetilde{\bold{G}}$ for $z \in (0,1]$ and thus, the above holds. 

Now, note that if we multiply Eq.\,\eqref{ap4} from the left by $\bold{I} - z \widetilde{\bold{G}}^{-1} $ and from the right by $\bold{I} - \widetilde{\bold{F}}(z)$, then both sides of the resulting equation are analytic for $z\in (0,1]$. Hence, since the matrices $\bigl( \bold{I} - z \widetilde{\bold{G}}^{-1} \bigr)$ and  $\bigl(\bold{I} - \widetilde{\bold{F}}(z)\bigr)$ are invertible as long as $z \notin \Lambda(\widetilde{\bold{G}})$ and thus for $z \in (0, \gamma)$, the aforementioned multiplication can be reversed and Eq.\,\eqref{ap4} holds for $z \in (0,\gamma)$ by analytic continuation. The result follows by substituting the above equation, along with Eq.\,\eqref{Wpgf1}, into Eq.\,\eqref{Wpgf} and using analytic continuation to extend the domain from $z \in (0,\gamma)$ to $z \in (0,1]$ such that $z \notin \Lambda(\widetilde{\bold{G}})$. 

To prove Eq.\,\eqref{wld}, we use similar arguments to those used for the result of Proposition \ref{PropDens}, to show that for $n \geqs 0$
\begin{linenomath*}\begin{equation} \label{eqV2}
\widetilde{\bold{L}} =\widetilde{\bold{L}}^+(n) + \mathbb{P}\left(\tau_n < \infty, J_{\tau_{ n}} \right) \mathbb{P}\left(\tau_{-n}< \infty, J_{\tau_{ -n}} \right) \widetilde{\bold{L}},
\end{equation}\end{linenomath*}
where $\widetilde{\bold{L}}^+(n): = \mathbb{E}\bigl(\widetilde{\bold{L}}(0, \tau_{n})\bigr)$, from which it follows that
\begin{linenomath*}\begin{equation*}
\widetilde{\bold{L}}^+(n) = \Bigl[\bold{I} - \mathbb{P}\left(\tau_n < \infty, J_{\tau_{ n}} \right) \mathbb{P}\left(\tau_{-n}< \infty, J_{\tau_{ -n}} \right) \bigr)\Bigr] \widetilde{\bold{L}} =\Bigl[\bold{I} - \widetilde{\bold{G}}^n \mathbb{P}\left(\tau_{-n} < \infty, J_{\tau_{ -n}} \right)\Bigr] \widetilde{\bold{L}}.
\end{equation*}\end{linenomath*}
Multiplying this expression through by $\widetilde{\bold{G}}^{-n}$ (on the left) and recalling the form of $\widetilde{\bold{W}}(n)$ given in Eq.\,\eqref{ScaleMat1}, the result follows immediately.
So far we assume only that $\rho(\widetilde{\bold{G}}) < 1 $, hence by Remark \ref{RemG}
that either $v<1$ or that $v=1$ and $\kappa^\prime (0)>0$.
To handle the remaining (limiting) case of $v=1$ and $\kappa^\prime (0)\leq 0$
we can follow the proof of Theorem 1 in
\cite{IVAN1}. Namely
we can use the representation \eqref{wld} of the scale function,
take $ v \rightarrow 1$ and observe that matrices $\widetilde{\bold{G}}$, $\widetilde{\bold{L}}^+(n)$ and $\widetilde{\bold{F}}(z)$
properly converge.

\end{proof}

\begin{Remark}
In \cite{IVAN2} the authors derive an equivalent result to Theorem \ref{ThmTwoSideUp} for a continuous-time MAP in the lattice and non-lattice case. Although their study focuses purely  on the continuous-time case, they do point out the connection for the discrete-time model (Remark 6 in \cite{IVAN2}) but do not provide any proof or further details.
\end{Remark}

 \begin{Remark}[Invertibility  of  $\widetilde{\bold{L}}^+(n)$, $\widetilde{\bold{G}}$ and $\widetilde{\bold{W}}(n)$]
\noindent Throughout the proof of the previous theorem and results earlier in this paper, we required invertibility of the fundamental matrix $\widetilde{\bold{G}}$ and the scale matrix $\W(n)$. We will now look at under what conditions such invertibility holds:

\label{Reminv}
\begin{enumerate}[(i)]
\item Following similar arguments as in \cite{IVAN2}, since the level process starts at $X_0 =0$, the expected number of visits at 0 before the process reaches level $n\geqs 0$, namely $\widetilde{\bold{L}}^+(n)=\mathbb{E}\bigl[\widetilde{\bold{L}}(0,\tau_{n})\bigr]$, satisfies
\begin{linenomath*}\begin{equation*}
\widetilde{\bold{L}}^+(n)=\bold{I}+\bold{\Pi}_n\,\widetilde{\bold{L}}^+(n),
\end{equation*}\end{linenomath*}
where $\bold{\Pi}_n$ is a probability matrix with $i,j$-th element containing the probability of a second visit to level 0 before reaching level $n$ and doing so in phase $j$, conditioned on the starting point $(0,i)$. Note that  $\bold{\Pi}_n$ is clearly a sub-stochastic, non-negative matrix, which implies $\rho\bigl(\bold{\Pi}_n\bigr)<1$ and thus $\bold{I}-\bold{\Pi}_n$ is invertible. Hence, $\widetilde{\bold{L}}^+(n)$ is also invertible, since from the above expression it follows that $\left( \bold{I} - \bold{\Pi}_n\right)\widetilde{\bold{L}}^+(n)=\bold{I}$.
\item
In order to show that $\widetilde{\bold{G}}$ is invertible, recall  that
\begin{linenomath*}\begin{equation*}
\widetilde{\bold{G}} = \sum_{m=-1}^\infty \widetilde{\bold{A}}_{-m} \widetilde{\bold{G}}^{m+1} =\widetilde{\bold{A}}_{1} +\sum_{m=0}^\infty \widetilde{\bold{A}}_{-m} \widetilde{\bold{G}}^{m+1},
\end{equation*}\end{linenomath*}
from which it follows that
\begin{linenomath*}\begin{align*}
\widetilde{\bold{A}}_{1} =\widetilde{\bold{G}}- \sum_{m=0}^\infty \widetilde{\bold{A}}_{-m} \widetilde{\bold{G}}^{m+1}&=\Bigl(\bold{I}-\sum_{m=0}^\infty \widetilde{\bold{A}}_{-m} \widetilde{\bold{G}}^{m}\Bigr)\bold{G}=\Bigl(\bold{I}-\bold{\Pi}_1\Bigr)\bold{G}
\end{align*}\end{linenomath*}
Therefore, since $\bold{I}-\bold{\Pi}_1$ is invertible, $\widetilde{\bold{G}}$ is invertible provided that $\widetilde{\bold{A}}_{1}$ is invertible. Finally, since $\widetilde{\bold{L}}^+(n)$ is invertible and given $\widetilde{\bold{G}}$ is invertible, then by Eq.\,\eqref{wld} it is clear that $\widetilde{\bold{W}}(n)$ is also invertible.
\end{enumerate}
\end{Remark}

\noindent Although Theorem \ref{ThmTwoSideUp} provides a number of representations for $\widetilde{\bold{W}}$, in the discrete case the scale matrix also satisfies a recursive relation. The recursion below generalises the recursion  for the scale function derived in \cite{AV1} and has also been discussed in \cite{IVAN2}.
\begin{Corollary}
	\label{CorW}
For $b\geqs 1$, the scale matrix $\widetilde{\bold{W}}(\cdot)$, defined in Theorem \ref{ThmTwoSideUp}, satisfies the following recursive equation
\begin{equation}
\label{EqRecur}
\widetilde{\bold{W}}(b+1) =  \widetilde{\bold{A}}_1^{-1} \Bigl(\widetilde{\bold{W}}(b) - \sum_{m=0}^{b-1} \widetilde{\bold{A}}_{-m} \widetilde{\bold{W}}(b-m) \Bigr),
\end{equation}
with $\widetilde{\bold{W}}(1)=\widetilde{\bold{A}}_1^{-1}$.
\end{Corollary}

\begin{proof}
To prove the recursive relation, consider the two-sided hitting probability $\mathbb{P}\bigl(\tau_a^+ < \tau_{-b}^-; J_{\tau^+_a} \bigr)$ and condition on the first time step. Then, for $a,b \geqs 1$, we have
\begin{align*}
\mathbb{P}\bigl(\tau_a^+ < \tau_{-b}^-; J_{\tau^+_a} \bigr) &= \sum_{m = -(b-1)}^1 \widetilde{\bold{A}}_m \mathbb{P}_m\bigl(\tau_a^+ < \tau_{-b}^-; J_{\tau^+_a} \bigr) \\
&=   \sum_{m = -(b-1)}^1 \widetilde{\bold{A}}_m \mathbb{P}\bigl(\tau_{a-m}^+ < \tau_{-(b+m)}^-; J_{\tau^+_{a-m}} \bigr),
\end{align*}
where the last equality follows from the Markov additive property.  Further, using   Theorem \ref{ThmTwoSideUp} and multiplying on the right by $\widetilde{\bold{W}}(a+b)$, the above expression can be re-written as
\begin{equation*}
\widetilde{\bold{W}}(b) = \sum_{m = -(b-1)}^1 \widetilde{\bold{A}}_m \widetilde{\bold{W}}(b+m).
\end{equation*}
and the recursive expression given in Eq.\,\eqref{EqRecur} follows directly after some basic algebraic manipulations.
For $\widetilde{\bold{W}}(1)$, recall Remark \ref{Reminv}  that $\widetilde{\bold{L}}^+(1)=(\bold{I}-\bold{\Pi}_1)^{-1}
$ and also  that $\widetilde{\bold{A}}_{1}^{-1} =\widetilde{\bold{G}} ^{-1}(\bold{I}-\bold{\Pi}_1)^{-1}=\widetilde{\bold{G}} ^{-1}\widetilde{\bold{L}}^+(1)=\widetilde{\bold{W}}(1)
$, from  Theorem \ref{ThmTwoSideUp}. \end{proof}

\begin{Remark} Under the same line of logic as Remark \ref{remkil}, we recall that the above results are more general than explicitly stated. For example, by superimposing killing Eq.\,\eqref{twoexitprob} is equivalent to
\begin{linenomath*}\begin{equation}
\mathbb{E}\Bigl({v}^{\tau^+_a} ; \tau^+_a < \tau^-_{-b}, J_{\tau^+_a} \Bigr) = \widetilde{\bold{W}}_{v}(b)\widetilde{\bold{W}}_{v}(a+b)^{-1},
\end{equation}\end{linenomath*}
for $v \in (0,1]$, where $\W_v(\cdot)$ is defined in Eq.\,\eqref{eqW1} with the rest of the results amended accordingly.
\end{Remark}

\subsubsection{Two-sided exit theory for arbitrary $\widetilde{\bold{A}}_1$  }

In Theorem \ref{ThmTwoSideUp}, we rely on the fact that $\widetilde{\bold{A}}_1$ is non-singular, which in turn ensures $\G$ is non-singular by Remark \ref{Reminv}. However, it turns out that a similar result can also be derived for arbitrary $\widetilde{\bold{A}}_1$ in terms of matrices closely related to the $\W$ scale matrix.

To see this, let us define $\widetilde{\bold{L}}^-(n): = \mathbb{E}\bigl(\widetilde{\bold{L}}\left(0, \tau_{-n}\right)\bigr)$ for $n \geqs 0$,  $\widetilde{\bold{M}}(n):=\mathbb{E}\bigl(\widetilde{\bold{L}}\left(-n, \tau_{-(n+1)}\right)\bigr)$ and recall $\widetilde{\bold{R}}$ is related to the `time-reversed' counterpart of $\widetilde{\bold{G}}$ (see Remark \ref{remkil}). Then, we have the following theorem.

\begin{Theorem}
	\label{ThmTwoSideUpSing}

Assume the matrix $\widetilde{\bold{A}}_{1}$ is singular. Then, there exists a  matrix $\widetilde{\bold{V}}: \mathbb{N} \rightarrow \mathbb{R}^{N\times N}$ with $\widetilde{\bold{V}}(0) = \bold{I}$, which is invertible and satisfies
\begin{linenomath*}\begin{equation}\label{twosidedsingular}
				\mathbb{P}\bigl(\tau^+_{ a}< \tau^-_{ -b} , J_{\tau^+_{a}} \bigr) = \widetilde{\bold{V}}(b) \widetilde{\bold{R}}^{a}\widetilde{\bold{V}}(a+b)^{-1},
		\end{equation}\end{linenomath*}
		where
		\begin{linenomath*}\begin{equation*}
				\widetilde{\bold{V}}(n)=\widetilde{\bold{L}}^-(n)=\Bigl[\bold{I} - \mathbb{P}\left(\tau_{-n}<\infty, J_{\tau_{-n}} \right) \widetilde{\bold{G}}^n \Bigr] \widetilde{\bold{L}}.
		\end{equation*}\end{linenomath*}
		Furthermore, it holds that
		\begin{linenomath*}\begin{equation}\label{WVofn}
				\widetilde{\bold{L}}^-(n)=\sum_{k=-1}^{n-1}\widetilde{\bold{M}}(k)\widetilde{\bold{R}}^k
		\end{equation}\end{linenomath*}
		and for $z \in (0,1]$ such that $z \notin \Lambda(\G)$, also
		\begin{linenomath*}\begin{equation}\label{WVofn2}
				\sum_{n=0}^\infty z^n \widetilde{\bold{M}}(n) = \left(\bold{I} - \widetilde{\bold{F}}(z)\right)^{-1}\Bigl(\bold{I} -  z^{-1}\widetilde{\bold{R}}   \Bigr).
		\end{equation}\end{linenomath*}
\end{Theorem}

\begin{proof}
	
	Assume now that the matrix $\widetilde{\bold{G}}$ is singular (which, by Remark \ref{Reminv}, is
		equivalent to the requirement that the matrix $\widetilde{\bold{A}}_{1}$ is singular).
		Then, from equation \eqref{eqV1} we can obtain an alternative representation for the two-sided exit probability of the form
		\begin{linenomath*}\begin{equation*}
				\mathbb{P}\bigl(\tau^+_{ a}< \tau^-_{ -b} , J_{\tau^+_{a}} \bigr) = \widetilde{\bold{H}}(b) \widetilde{\bold{G}}^a\widetilde{\bold{H}}(a+b)^{-1},
		\end{equation*}\end{linenomath*}
		where
		\begin{linenomath*}\begin{equation*}
				\widetilde{\bold{H}}(n)=\bold{I}-\mathbb{P}\left(\tau^-_{-n}< \infty, J_{\tau^-_{-n}} \right)\widetilde{\bold{G}}^{n},
			\end{equation*}
		\end{linenomath*}
		for $n\geqs 0$, as long as this matrix is invertible (see below).
		Moreover, by similar arguments as in Eq.\,\eqref{eqV2}, it follows that
		\begin{linenomath*}\begin{equation*}
				\widetilde{\bold{L}} =\widetilde{\bold{L}}^-(n) + \mathbb{P}\left(\tau_{-n}<\infty, J_{\tau_{-n}} \right) \mathbb{P}\left(\tau_n<\infty,  J_{\tau_{ n}} \right) \widetilde{\bold{L}},
		\end{equation*}\end{linenomath*}
		or equivalently
		\begin{linenomath*}\begin{equation*}
				\widetilde{\bold{L}}^-(n) =\Bigl[\bold{I} - \mathbb{P}\left(\tau_{-n}<\infty, J_{\tau_{-n}} \right) \widetilde{\bold{G}}^n \Bigr] \widetilde{\bold{L}}.
		\end{equation*}\end{linenomath*}
		Now, although we do not discuss in much details here the definition and probabilistic interpretation of the matrix $\widetilde{\bold{R}}$, \cite{IVAN2} explain that the matrix $\widetilde{\bold{R}}^n$ comprises of $i,j$-th elements representing the expected number of visits to level $n\geqs 0$ in phase $j$ before the first return to the level $0$, given $X_0=0$ and $J_0=i$. Hence, using this interpretation, we observe that
		\begin{linenomath*}\begin{equation*}
				\widetilde{\bold{G}}^n\widetilde{\bold{L}}=\mathbb{E}\left(\widetilde{\bold{L}}(n,\infty)\right) = \widetilde{\bold{L}}\widetilde{\bold{R}}^n
		\end{equation*}\end{linenomath*}
		and therefore, straightforward calculations show that Eq.\,\eqref{twosidedsingular} holds for $\widetilde{\bold{V}}(n) = \widetilde{\bold{L}}^-(n)$
		as long as this matrix	is invertible for all $n\geqs 0$. Note that this can easily be verified by employing the same argument as in (i) of Remark \ref{Reminv} for $n \leqs 0$ and considering $\boldsymbol{\Pi}_{-n}$ instead of $\boldsymbol{\Pi}_n$.

		To prove Eq.\,\eqref{WVofn}, we use similar arguments as \cite{IVAN2} and employ the Markov property to obtain
		\begin{linenomath*}\begin{equation*}
		\widetilde{\bold{L}}^-(n+1)=\widetilde{\bold{L}}^-(n) + \widetilde{\bold{M}}(n)\widetilde{\bold{R}}^n,
		\end{equation*}\end{linenomath*}
	and, in particular, $\widetilde{\bold{L}}^-(1) = \widetilde{\bold{M}}(0)$, from which the result follows directly.
	
		Finally, to prove the transform in Eq.\,\eqref{WVofn2}, we again follow the methodology of \cite{IVAN2} and first note that by conditioning on the first time period, for $n \geqs 1$, we have
		\begin{linenomath*}\begin{equation}\label{eqM1}
				\widetilde{\bold{M}}(n)=\sum_{m=-1}^n\widetilde{\bold{A}}_{-m} \widetilde{\bold{M}}(n-m),
		\end{equation}\end{linenomath*}
	whilst, for $n=0$, it follows that
	 \begin{linenomath*}\begin{equation}\label{eqM2}
	 \widetilde{\bold{M}}(0)=\bold{I}+\widetilde{\bold{A}}_1\widetilde{\bold{M}}(1)+\widetilde{\bold{A}}_0\widetilde{\bold{M}}(0).
	 	\end{equation}\end{linenomath*}
	Taking transforms on both sides of Eq.\,\eqref{eqM1} and noting the above expression for $\widetilde{\bold{M}}(0)$, after some algebraic manipulations (see Appendix), we obtain

	\begin{linenomath*}\begin{align}\label{eqMTrans}
		\sum_{n=0}^\infty z^n\widetilde{\bold{M}}(n) &=\bold{I}-z^{-1}\widetilde{\bold{A}}_1\widetilde{\bold{M}}(0) +\F(z)\sum_{k=0}^\infty z^k	 \widetilde{\bold{M}}(k) \notag \\
		&= \bold{I}-z^{-1}\widetilde{\bold{R}} +\F(z)\sum_{k=0}^\infty z^k	 \widetilde{\bold{M}}(k),
	\end{align}\end{linenomath*}
where in the last equality we have use the probabilistic interpretation of $\widetilde{\bold{R}}$ to note that $\widetilde{\bold{R}} = \widetilde{\bold{A}}_1 \widetilde{\bold{L}}^-(1) = \widetilde{\bold{A}}_1 \widetilde{\bold{M}}(0)$. The result follows directly by solving the above expression for the transform and holds as long as $\bold{I}-\F(z)$ is invertible.
\end{proof}

\noindent Although the result of Theorem \ref{ThmTwoSideUpSing} is clearly more general than that of Theorem \ref{ThmTwoSideUp}, as it does not require invertibility of $\widetilde{\bold{A}}_1$, it deviates from the well known form and methodology of scale matrices (functions) seen throughout the literature. As such, since the purpose of this paper is to demonstrate and derive the fully discrete analogue of the well known `scale theory' for MACs, we will assume the invertibility of $\widetilde{\bold{A}}_1$ throughout the rest of this paper but point out that all the following results could also be generalised to the arbitrary case (see \cite{IVAN2} for more details of such results in the continuous-time setting).

At this point it is natural to consider the corresponding downward exit problems (one and two-sided). However, in order to do this we must first discuss some fluctuation problems for the associated `reflected' MAC process which is discussed in the following section.

\section{Exit Problems For Reflected MACs}
In this section, we deviate from the basic MAC described above and consider the associated two-sided reflection of the process $\{X_n\}_{n \geqs 0}$ with respect to a strip $[-d, 0]$ with $d > 0$. The choice of strip is purely for notational convenience and can easily be converted to the general strip $[-b,a]$ by shifting the process appropriately. The main result of this section is given in Theorem \ref{ThmPGR} which is interesting in its own right, but is also used to derive the aforementioned downward exit problems of the original (un-reflected) MAC.

Following the same line of logic as in \cite{IVAN1}, let us  define the reflected process by
\begin{linenomath*}\begin{equation*}
\label{eqH}
H_n = X_n + R^-_n - R^+_n,
\end{equation*}\end{linenomath*}
where $R^-_n$ and $R^+_n$ are known as regulators for the reflected process at the barriers $-d$ and $0$, respectively, which ensure that the process $\{H_n\}_{n \geqs 0}$ remains within the strip $[-d,0]$ for all $n \in \mathbb{N}$. Note that in continuous-time and space, the reflected process $\{H_n\}_{n \geqs 0}$ corresponds to the solution of the so-called Skorohod problem (see \cite{KRUK}). By the construction of $\{H_n\}_{n \geqs 0}$, it is clear that $\{R^-_n\}_{n \geqs 0}$ and $\{R^+_n\}_{n \geqs 0}$ are both non-decreasing processes, with $R^-_0 = R^+_0 = 0$, when $X_0$ in $[-d,0]$, which only increase during periods when $H_n = -d$ and $H_n = 0$, respectively. Moreover, since $\{X_n\}_{n \geqs 0} $ is `spectrally negative' the upward regulator $\{R^+_n\}_{n \geqs 0}$ increases by at most one per unit time.

Now, let us denote by $\rho_k$, the right inverse of the regulator $\{R^+_n\}_{n \geqs 0}$, defined by
\begin{linenomath*}\begin{equation}
\label{eqinv}
\rho_k = \inf\{ n \geqs 0 : R^+_n > k \} = \inf\{ n \geqs 0 : R^+_n = k + 1 \},
\end{equation}\end{linenomath*}
such that $R^+_{\rho_k} = k + 1$. Then, since an increase in $\{R^+_n\}_{n \geqs 0}$ only occurs whilst $H_n = 0$, it follows that $H_{\rho_k} = 0$ and thus, $R^-_{\rho_k} = (k+1) - X_{\rho_k}$. Hence, by the strong Markov property of $\{X_n\}_{n \geqs 0}$, we have that $\left\{ \left(R^-_{\rho_k}, J_{\rho_k} \right) \right\}_{k \geqs 0}$ is itself a MAC with random initial position $(R^-_{\rho_0}, J_{\rho_0})$ when $X_0 \in [-d,0]$ and non-negative jumps within the level process $\{R^-_{\rho_k}\}_{k \geqs 0}$. Thus, in a similar way as for the original MAC $(X, J)$, we can define its p.g.m.\,, given $X_0 = 0$, by
\begin{linenomath*}\begin{equation}
\label{eqPGF}
\mathbb{E} \bigl( z^{R^-_{\rho_k}}; J_{\rho_k} \bigr) = \bigl( \widetilde{\bold{F}}^*(z) \bigr)^{k+1}, \quad \widetilde{\bold{F}}^*(z) := \mathbb{E} \bigl( z^{R^-_{\rho_{_0}}}; J_{\rho_{_0}} \bigr).
\end{equation}\end{linenomath*}
\begin{Remark}
\label{RemTimes}
In the continuous case, $X_0 = 0$ is a regular point on $(0, \infty)$ and thus, it follows that $\rho_{_0} = 0$ a.s.\,and thus $\mathbb{E} \bigl( z^{R^-_{\rho_{_0}}}; J_{\rho_{_0}} \bigr) = \bold{I}$ (see \cite{IVAN1} for details). However, in the fully discrete set-up, we have already mentioned that $R^-_{\rho_0}$ is random for $X_0=0$ and is due to the possibility of the process experiencing a negative jump in the first time period such that $\rho_{_0} \neq 0$. Moreover, the process may drop below the lower level $-d$ (resulting in a jump in $\{R^-_n\}_{n \geqs 0}$) before the stopping time $\rho_{_0}$, and justifies the choice of the p.g.m.\,$\mathbb{E} \bigl( z^{R^-_{\rho_{_0}}}; J_{\rho_{_0}} \bigr)$ above, compared to $\mathbb{E} \bigl( z^{R^-_{\rho_{_1}}}; J_{\rho_{_1}} \bigr)$  in the continuous case (see \cite{IVAN1}). On the other hand, we note that if $X_0=1$, then $\mathbb{E} _1\bigl( z^{R^-_{\rho_{0}}}; J_{\rho_{0}} \bigr)=\bold{I}$, since $R^+_0=1$, and thus  $\rho_0=0$. The latter observation will play a crucial role in analysing the distribution of $(R^-_{\rho_0}, J_{\rho_0})$, which is given in the following theorem in terms of the second ${v}$-scale matrix, denoted $\Z_{v}$, and defined for $z \in (0,1]$ and $v \in (0,1]$, by
\begin{linenomath*}\begin{equation}
\label{eqZMat}
\widetilde{\bold{Z}}_{v}(z,n) = z^{-n} \Bigl[\bold{I} + \sum_{k=0}^n z^k\, \widetilde{\bold{W}}_{v}(k) \bigl(\bold{I} - {v}\widetilde{\bold{F}}(z) \bigr)   \Bigr],
\end{equation}\end{linenomath*}
with $\Z_v(z, 0) = \bold{I}$, for all $z \in (0,1]$ and $v \in (0,1]$ and $\Z_1(z,n) =: \Z(z, n)$.
\end{Remark}

\begin{Theorem}
\label{ThmPGR}
For $z \in (0,1]$, such that $z \notin \Lambda(\G)$, and $x \in [-d,1]$ it holds that $\Z(z, d+1)$ is invertible and
\begin{linenomath*}\begin{equation}
\label{eqPGR}
\mathbb{E}_x\bigl(z^{R^-_{\rho_{_0}}} ; J_{\rho_{_0}} \bigr) = \Z(z, d+x)\Z(z, d+1)^{-1},
\end{equation}\end{linenomath*}
where $\widetilde{\bold{Z}}(z, n)$ is defined by Eq.\,\eqref{eqZMat}.
\end{Theorem}
\begin{proof} The proof of this theorem actually follows a similar line of logic as the proof of Theorem \ref{TheoremOcc} however, due to the nature of the reflected process, the calculations require greater attention.
	
	First note that since $H_{\rho_{k}} = 0$ for each $k \in \mathbb{N}$, we have $X_{\rho_{k}} = k+1-R^-_{\rho_{k}}$ and thus $\{(X_{\rho_k}, J_{\rho_k})\}_{k \geqs 0}$ is a MAC having unit (upward) drift and downward jumps described by $\{R^-_{\rho_k}\}_{k \geqs 0}$ with random `initial' position $X_{\rho_0} = 1 - R^-_{\rho_0}$. Moreover, its occupation mass in the bivariate state $(y,j) \in \mathbb{Z} \times E$ is defined by $\widetilde{L}^*(y,j,\infty) = \sum_{k = 0}^\infty1_{\left( X_{\rho_k} = y, J_{\rho_k}=j\right)}
$
and thus, from the occupation mass formula in Eq.\,\eqref{occdens}, we have
\begin{linenomath*}\begin{equation*}
\sum_{k=0}^\infty z^{-X_{\rho_k}}1_{(J_{\rho_k} = j)} = \sum_{m \in \mathbb{Z}} z^{-m}\widetilde{L}^*(m,j,\infty).
\end{equation*}\end{linenomath*}
Taking expectations on both sides of this expression, conditioned on the initial state $X_0 = x \in [-d, 1]$, and writing in matrix form yields
\begin{linenomath*}\begin{equation}
\label{eqOCC}
\sum_{k=0}^\infty \mathbb{E}_x\left( z^{-X_{\rho_k}}; J_{\rho_k} \right) = \sum_{m \in \mathbb{Z}} z^{-m}\widetilde{\bold{L}}_x^*(m,\infty),
\end{equation}\end{linenomath*}
where $\widetilde{\bold{L}}_x^*(m,\infty)$ is the infinite-time occupation matrix with $i,j$-th element given by \\ $\bigl(\widetilde{\bold{L}}_x^*(m,\infty)\bigr)_{ij} = \mathbb{E}_x\bigl( \widetilde{L}^*(m,j,\infty) \mid J_0 = i\bigr)$.

Let us now treat  the left-hand side and right-hand side of Eq.\,\eqref{eqOCC} separately. Firstly, using the fact that $X_{\rho_{k}} = k+1-R^-_{\rho_{k}}$, along with the strong Markov and Markov additive properties of $\{R^-_{\rho_k}\}_{k \geqs 0}$, the l.h.s.\,of Eq.\,\eqref{eqOCC} can be re-written in the form
\begin{linenomath*}\begin{eqnarray}
\label{eqFirstTerm}
\sum_{k=0}^\infty \mathbb{E}_x\left( z^{-X_{\rho_k}}; J_{\rho_k} \right) &= &\sum_{k=0}^\infty z^{-(k+1)}\mathbb{E}_x\bigl( z^{R^-_{\rho_k}}; J_{\rho_k} \bigr) \notag \\
&=& \sum_{k=0}^\infty z^{-(k+1)}\mathbb{E}_x\bigl( z^{R^-_{\rho_{_0}}}; J_{\rho_{_0}} \bigr)\mathbb{E}\bigl( z^{R^-_{\rho_{k-1}}}; J_{\rho_{k-1}} \bigr) \notag \\
&= &\mathbb{E}_x\bigl( z^{R^-_{\rho_{_0}}}; J_{\rho_{_0}} \bigr) \sum_{k=0}^\infty z^{-(k+1)}\bigr( \F^*(z)\bigr)^k \notag \\
&= &\mathbb{E}_x\bigl( z^{R^-_{\rho_{_0}}}; J_{\rho_{_0}} \bigr) z^{-1} \bigl( \bold{I} - z^{-1}\F^*(z)\bigr)^{-1},
\end{eqnarray}\end{linenomath*}
for all  $z \in (0,1]$ such that $z > (\rho(\F^*(z))$. We note that since $\{(X_{\rho_k}, J_{\rho_k})\}_{k \geqs 0}$ is a MAC, it holds that $\mathbb E(z^{-X_{\rho_k}};J_{\rho_k})=\bigl(\mathbb E(z^{-X_{\rho_0}};J_{\rho_0})\bigr)^{k+1}$. Now, let us define $\overline{\tau}_1=\inf\{ \rho_k\geqs 0:X_{\rho_k}=1\}$ and $\overline {\bold{G}}$ to be the probability transition matrix such that $\mathbb P(\overline{\tau}_1 < \infty, J_{\overline{\tau}_1})=\overline {\bold{G}}$, which is sub-stochastic, (implying $\rho(\overline {\bold{G}}) < 1$) in the case of killing or no killing and negative drift. Then, based on similar arguments as those discussed in the proof of Theorem \ref{ThmTwoSideUp}, since the eigenvalues of $\overline {\bold{G}}$ coincide with the roots of $\bold I- \mathbb E(z^{-X_{\rho_0}};J_{\rho_0})=(\bold  I -z^{-1} \widetilde{\bold{F}}^*(z))$, then we conclude that $\bold  I -z^{-1} \widetilde{\bold{F}}^*(z)$ is invertible for $z \in (\rho(\overline {\bold{G}}),1]$. In fact, since $\{X_n\}_{n \geqs 0}$ is an upward skip-free process, it follows that  $\overline{\tau}_1=\tau_1$ for $X_0 \in [-d,1]$, which implies  $\overline {\bold{G}}=\widetilde {\bold{G}}$, and thus   $\bold  I -z^{-1} \widetilde{\bold{F}}^*(z)$ is invertible for $z \in (\rho(\widetilde {\bold{G}}), 1]$. Hence, by applying the same analytic continuation argument as in Theorem \ref{ThmTwoSideUp}, the above expression holds for $z\in (\rho(\widetilde {\bold{G}}),1)$.

Now, for the r.h.s.\,of Eq.\,\eqref{eqOCC}, let us introduce the matrix quantity $\widetilde{\bold{C}}_{-y}$ whose individual $i,j$-th elements denote the probability of the process $\{X_n\}_{n \geqs 0}$ first hitting some level $-y < 0 $ from initial states $X_0 = 0$ and $J_0 = i$, and then hitting the upper level $(d+1) - y$ whilst $J_n = j$, such that
\begin{linenomath*}\begin{align}
\label{eqC}
\widetilde{\bold{C}}_{-y} &= \mathbb{P}\bigl(\tau_{-y}<\infty, J_{\tau_{-y}} \bigr)\mathbb{P}_{-y}\bigl(\tau_{d+1-y}< \infty, J_{\tau_{d+1-y}}\bigr) \notag \\
&=\mathbb{P}\bigl(\tau_{-y}<\infty,J_{\tau_{-y}} \bigr)\mathbb{P}\bigl(\tau_{d+1}<\infty,J_{\tau_{d+1}} \bigr)  =\mathbb{P}\bigl(\tau_{-y}<\infty,J_{\tau_{-y}} \bigr)\widetilde{\bold{G}}^{d+1}.
\end{align}\end{linenomath*}
Using this quantity, it is possible to show that for $X_0 = x \in [-d,1]$
\begin{linenomath*}\begin{equation*}
\label{eqCMAT}
\widetilde{\bold{L}}^*_x(m, \infty) = \left[ 1_{(m> 0)} \mathbb{P}_x\big(\tau_m < \infty,  J_{\tau_m}\big) +  1_{(m \leqs 0)} \widetilde{\bold{C}}_{m - (d+1) - x}\right] \sum_{k =0}^\infty \left(\widetilde{\bold{C}}_{-(d+1)}\right)^k.
\end{equation*}\end{linenomath*}
To see this, note that $\widetilde{L}^*(m,j,\infty)$ corresponds to the (local) time points $\rho_k$ (increases in $\{R^+_n\}_{n \geqs 0}$) such that $X_{\rho_k} = m$ and $J_{\rho_k} = j$,  or alternatively, time points $k \geqs 0$ for which $\{R^+_n\}_{n \geqs 0}$ is increasing and $X_k = m$ and $J_k = j$. Then, for $m > 0$, the first increase of $\widetilde{L}^*(m,j,\infty)$ is at $\tau_m$, otherwise, for $m \leqs 0$, $\{X_n\}_{n \geqs 0}$ has to first visit the state (level) $m-(d+1)$ to ensure that at the next time the process $\{X_n\}_{n \geqs 0}$ visits the level $m < 0$, the `reflected process' $\{H_n\}_{n \geqs 0}$ was at its upper boundary in the previous time period ($H_{n-1} = 0$), resulting in an increase of $\{R^+_n\}_{n \geqs 0}$. Every subsequent increase of $\widetilde{L}^*(m,j,\infty)$ is obtained in a similar way. Thus, the above equation follows by application of the strong Markov and Markov additive properties.

Taking transforms on both sides of the above equation, it yields
\begin{linenomath*}\begin{align}
\label{eqC1}
\sum_{m \in \mathbb{Z}} z^{-m} \widetilde{\bold{L}}^*_x(m, \infty) &= \Bigl( \sum_{m=1}^\infty z^{-m}\mathbb{P}_x\big(\tau_m < \infty, J_{\tau_m}\big) +  \sum_{m = -\infty}^{0} z^{-m}\widetilde{\bold{C}}_{m - (d+1) - x}\Bigr) \sum_{k =0}^\infty \left(\widetilde{\bold{C}}_{-(d+1)}\right)^k \notag  \\
  &= \Bigl( \sum_{m=1}^\infty z^{-m}\widetilde{\bold{G}}^{m-x} +  \sum_{m = -\infty}^{0} z^{-m}\mathbb{P}\big(\tau_{m-(d+1)-x}< \infty, J_{\tau_{m-(d+1)-x}} \big) \widetilde{\bold{G}}^{d+1}
 \Bigr) \notag
 \\ &\quad \times \left(\bold{I} - \widetilde{\bold{C}}_{-(d+1)}\right)^{-1},
\end{align}\end{linenomath*}
where we have used the fact that $\sum_{k =0}^\infty \bigl(\widetilde{\bold{C}}_{-(d+1)}\bigr)^k = \bigl(\bold{I} - \widetilde{\bold{C}}_{-(d+1)}\bigr)^{-1}$ in the presence of killing, since $ \widetilde{\bold{C}}_{-(d+1)}$ is a sub-stochastic matrix and thus, its Perron-Frobenius eigenvalue is less than 1. Now, the first term inside the brackets of the last expression is clearly equivalent to $-\bigl( \bold{I} - z\widetilde{\bold{G}}^{-1}\bigr)^{-1}\widetilde{\bold{G}}^{-x}$ for all $z \in (\rho(\widetilde{\bold{G}}), 1]$, whilst by the change of variable $k = m - (d+1) - x$, the second term within the brackets becomes
\begin{linenomath*}\begin{align*}
\label{eqSUM}
z^{-(d+1) - x} \sum_{k = -\infty}^{-(d+1+x)} z^{-k}\mathbb{P}\big(\tau_k < \infty, J_{\tau_{k}} \big) \widetilde{\bold{G}}^{d+1} = z^{-(d+1) - x} \sum_{m = d+1 + x}^\infty z^{m}\mathbb{P}\big(\tau_{-m} < \infty, J_{\tau_{-m}} \big) \widetilde{\bold{G}}^{d+1},
\end{align*}\end{linenomath*}
and thus, after some algebraic manipulations (see Appendix), Eq.\eqref{eqC1} can be re-written as
\begin{linenomath*}\begin{equation}
\label{eqC1.1}
\sum_{m \in \mathbb{Z}} z^{-m} \widetilde{\bold{L}}^*_x(m, \infty)
   =z^{-1}\Z(z, d+x)(\bold{I} - \F(z))^{-1} \W(d+1)^{-1},
\end{equation}\end{linenomath*}
where $\Z(z, n)$ is defined in Eq.\,\eqref{eqZMat}. Finally, by combining Eqs.\,\eqref{eqFirstTerm} and \eqref{eqC1.1}, we obtain for  $z \in (\rho( \widetilde{\bold{G}}),1)$
\begin{linenomath*}\begin{equation}
\label{eqResult}
\mathbb{E}_x\bigl(z^{R^-_{\rho_{_0}}}; J_{\rho_{_0}} \bigr) \bigl( \bold{I} -z^{-1} \F^*(z) \bigr)^{-1}= \Z(z, d+x)(\bold{I} - \F(z))^{-1} \W(d+1)^{-1}.
\end{equation}\end{linenomath*}
To complete the proof, it remains  to determine the form of the matrix $\F^*(z)$. To do this, let $x = 1$ into the above expression which, after using the fact that $\mathbb{E}_1\bigl(z^{R^-_{\rho_{_0}}}; J_{\rho_{_0}} \bigr)=\bold{I}$ since in this case $\rho_0 = 1$ and taking inverses on both sides, gives
\begin{linenomath*}\begin{equation*}
 \bold{I} -z^{-1} \F^*(z)  = \W(d+1)(\bold{I} - \F(z))^{-1} \Z(z, d+1)^{-1}.
\end{equation*}\end{linenomath*}
Note that this expression shows that $ \Z(z, d+1)$ is an invertible matrix as long as $\widetilde{\bold{W}}(d+1)$ is invertible and after solving w.r.t.\,$\F^*(z)$ also gives
\begin{linenomath*}\begin{equation}
\label{fstar}
\F^*(z)=z\left[\bold{I}-\W(d+1)(\bold{I} - \F(z))^{-1} \Z(z, d+1)^{-1}\right].
\end{equation}\end{linenomath*}
The result follows by substituting the above expression for  $\F^*(z)$ back into Eq.\,\eqref{eqResult}, re-arranging and employing analytic continuation in a similar way as previous.

\end{proof}

\begin{Remark}
We point out that setting $X_0=x=0$ in the result of Theorem \ref{ThmPGR}, gives an equivalent representation for $\widetilde{\bold{F}}^*(z)$ in terms of the $\Z$ scale matrix only, i.e.,
 \begin{linenomath*}\begin{equation*}
 		\label{fstar1}
 		\F^*(z)=\Z(z, d)\Z(z, d+1)^{-1}.
 \end{equation*}\end{linenomath*}
Moreover, we note that based on its definition, it is also possible to use the recursive relation of $\W(\cdot)$, given in Corollary \ref{CorW}, to obtain explicit values of $\Z(z, \cdot)$.
\end{Remark}

\noindent Although the result of Theorem \ref{ThmPGR} is interesting in its own right, its main importance in this paper is as a stepping stone for proving a similar result for the associated one-sided reflected process (see Section \ref{OSref} below) and consequently, the two-sided and one-sided (as a limiting case) downward exit problems for the original (non-reflected) MAC.
\subsection{One-Sided Reflection}
\label{OSref}
As discussed in the previous section, the downward exit problems can be solved using an auxiliary result for the one-sided (lower) reflected process. As such, let us define
\begin{linenomath*}\begin{equation*}
Y_n = X_n + R^{-b}_n,
\end{equation*}\end{linenomath*}
where $R^{-b}_n = -b-(-b \wedge \underline{X}_n)$ with $\underline{X}_n = \inf_{k \leqs n} \{X_k\}$, denotes a lower reflecting barrier at the level $-b \leqs 0$. Note that this is equivalent to shifting the two-sided reflected process of the previous section and letting the upper reflecting barrier tend to infinity. Then, by direct application of Theorem \ref{ThmPGR} we get the following  corollary.
\begin{Corollary}
\label{CorOneSide}
For $X_0 = 0$, $z \in (0,1]$ such that $z \notin \Lambda(\G)$, $a > 0$ and $b \geqs 0$, it holds that

\begin{linenomath*}\begin{equation}
\label{eqOneSidedRef}
\mathbb{E}\Bigl(z^{R^{-b}_{\,\tau_a}};J_{\tau_a} \Bigr) = \Z(z, b)\Z(z, a+b)^{-1},
\end{equation}\end{linenomath*}
\end{Corollary}
\begin{proof}
Note that if we set $d = (a-1)+b$ in Theorem \ref{ThmPGR}, then $\{(H_n + (a-1), R^-_n)\}_{n \geqs 0}$ up to time $\rho_{_0}$ coincides with $\{(Y_n, R^{-b}_n)\}_{n \geqs 0}$ up to time $\tau_{a}$, given that $H_0 + (a-1) = Y_0$. Hence, the result follows directly from Theorem \ref{ThmPGR} with $x = -(a-1)$.
\end{proof}

\section{Downward Exit Problems}

For the one and two-sided downward exit problems, we are interested in the events $\{\tau^-_{-b} < \infty\}$ and $\{ \tau_{-b}^- < \tau^+_a\}$, respectively. Unlike the upward exit, due to the possibility of downward jumps in the MAC, the stopping time $\tau^-_{-b}$ is not necessarily equivalent to the first hitting time of the level $-b < 0$, i.e., $\tau^-_{-b} \neq \tau_{-b}$. It is for this reason that we cannot employ the Markov type structure seen for the upward exit identities and, instead, rely on the results of the reflected processes of the previous section.

Although it would appear easier to derive in the first instance, it turns out that the one-sided downward exit problem can easily be obtained as a limiting case of the related two-sided case and as such, is considered in the following.

\subsection{Two-Sided Exit Downward - $\{ \tau_{-b}^- < \tau_a^+\}$ }
\label{SecTwoSide}

For the two-sided downward exit problem, we are interested in the time of exiting the fixed `strip', $[-b,a]$, such that $\{ \tau_{-b}^- < \tau^+_a\}$. Using the result for the transform of the downward regulator for the one-sided reflected process, we obtain the following corollary.

\begin{Corollary}
\label{CorTwoSide}
For $z \in [0,1]$ such that $z \notin \Lambda(\G)$, it holds that for any $a,b > 0$, we have
\begin{linenomath*}\begin{equation*}\resizebox{1.01\hsize}{!}{$
\label{EqTwoDown}{
\mathbb{E}\Bigl( z^{-X_{\tau^-_{-b}}}; \tau^-_{-b} < \tau^+_a, J_{\tau^-_{-b}}\Bigr) = z^{b-1} \bigl[\Z(z, b-1)-\W(b)\W(a+b)^{-1}\Z(z, a+b-1)\bigr] .}$}\notag \\
\end{equation*}\end{linenomath*}
\end{Corollary}

\begin{proof}
Consider the one-sided reflected process of Section \ref{OSref}. Then, by the strong Markov and Markov additive properties, it follows that for $b > 0$, we have
\begin{linenomath*}\begin{equation*}
\mathbb{E} \Bigl(z^{R^{-(b-1)}_{\tau^+_a}}; J_{\tau^+_a} \Bigr) = \mathbb{P}\bigl(\tau^+_a < \tau^-_{-b}; J_{\tau^+_a} \bigr) + \mathbb{E} \Bigl( z^{-(b-1)-X_{\tau^-_{-b} }}; \tau^-_{-b}<\tau^+_a,J_{\tau^-_{-b}} \Bigr) \mathbb{E}\Bigl(z^{R^{0}_{\tau^+_{a+b-1}}}; J_{\tau^+_{a+b-1}} \Bigr).
 \end{equation*}\end{linenomath*}
Re-arranging this expression and using the identities of Theorem \ref{ThmTwoSideUp} and Corollary \ref{CorOneSide}  the result follows immediately.
\end{proof}

\subsection{One-Sided Exit Downward}

For the one-sided exit problem, we are now interested in the event that of down-crossing the level $-b < 0$, whilst the upward movement of the MAC is un-restricted, i.e., $\{\tau^-_{-b} < \infty\}$ which, as already mentioned, can be viewed as a limiting case of the corresponding two-sided problem as $a \rightarrow \infty$. In fact, this is the argument used to obtain the following one-sided downward exit identity.

\begin{Corollary}
\label{CorOneSideDown}
Assume we are not in the case of no-killing and zero drift, i.e., it is not true that both $v = 1$ and $\kappa'(1) = 0$. Then, $\widetilde{\bold{L}}$ is invertible and, for $z \in (0,1]$ such that $z \notin \Lambda(\G)$ and $b > 0$, we have
\begin{linenomath*}\begin{equation}
\label{OneSideDown}
\mathbb{E}\Bigl(z^{-X_{{\tau^-_{-b}}}}; J_{_{\tau^-_{-b}}} \Bigr) = z^{b-1}\Bigl[\Z (z, b-1) - z \W(b)\widetilde{\bold{L}}^{-1}\bigl(\bold{I} -z \G^{-1}\bigr)^{-1}\widetilde{\bold{L}}(\F(z) - \bold{I} )\Bigr] .
\end{equation}\end{linenomath*}
\end{Corollary}

\noindent

\begin{proof}
Firstly, the invertibility of $\widetilde{\bold{L}}$ follows from Remark \ref{RemInvert}, for which it cannot hold that both $v=1$ and $\kappa'(1) = 0$. On the other hand, Eq.\,\eqref{OneSideDown} follows from taking the limit of the two-sided case (see Corollary \ref{CorTwoSide}) as the upper barrier tends to infinity, i.e., $a \rightarrow \infty$.
In order to evaluate the value of the limit of $\W(b)\W(a+b)^{-1}\Z(z, a+b-1)$ as $a \rightarrow \infty$, note that by the definition of the scale matrix $\Z(z, n)$, and using Eq.\,\eqref{eqWTran},  it follows that
\begin{linenomath*}\begin{eqnarray*}
\Z(z, a+b-1) &= &z^{-(a+b-1)}\Bigl( \bold{I} + \sum_{k=0}^{a+b-1}z^k \W(k)\bigl( \bold{I} - \F(z)\bigr)\Bigr) \\
&=& z^{-(a+b-1)}\sum_{k=a+b}^\infty z^k \W(k)\bigl(\F(z) - \bold{I}\bigr) \\
&=& \sum_{n=1}^\infty z^n \W(n+a+b-1)\bigl(\F(z) - \bold{I}\bigr).
\end{eqnarray*}\end{linenomath*}
Moreover, by using the fact that $\W(n) = \G^{-n} \widetilde{\bold{L}}(n)$ (see Theorem \ref{ThmTwoSideUp}), multiplication of the above expression by $\W(a+b)^{-1}$ on the left yields
\begin{linenomath*}\begin{equation*}
\W(a+b)^{-1}\Z(z, a+b-1) = \widetilde{\bold{L}}^{-1}(a+b)\sum_{n=1}^\infty z^n \G^{-(n-1)} \widetilde{\bold{L}}(n + a + b -1) \bigl(\F(z) - \bold{I}\bigr),
\end{equation*}\end{linenomath*}
which, after taking $a \rightarrow \infty$ and using dominated convergence theorem, gives
\begin{linenomath*}\begin{eqnarray*}
\lim_{a \rightarrow \infty }\W(a+b)^{-1}\Z(z, a+b-1) &=& \widetilde{\bold{L}}^{-1}z\sum_{n=0}^\infty \bigl(z \G^{-1}\bigr)^n \widetilde{\bold{L}} \bigl(\F(z) - \bold{I}\bigr) \\
&=&\widetilde{\bold{L}}^{-1}z\bigl(\bold{I} -z\G^{-1} \bigr)^{-1} \widetilde{\bold{L}} \bigl(\F(z) - \bold{I}\bigr),
\end{eqnarray*}\end{linenomath*}
for $z \in (0, \gamma)$, where $\widetilde{\bold{L}}$ is the infinite time occupation mass matrix defined in Proposition \ref{PropDens}. Finally, by analytic continuation, it can be shown that the above holds for all $z \in (0,1]$ such that $z \notin \Lambda(\G)$ and thus, by
 taking the limit as $a \rightarrow \infty$ in Corollary \ref{CorTwoSide}, using the above expressions and re-arranging, we obtain the result.
\end{proof}

\begin{Remark}
	\noindent We point out once again that by explicitly imposing killing, Corollary \ref{CorTwoSide} and consequently Corollary \ref{CorOneSideDown} equivalently yield the following joint transforms for $v \in (0,1]$
	\begin{linenomath*}\begin{equation*}\resizebox{1.01\hsize}{!}{$
				\mathbb{E}\Bigl( v^{\tau^-_{-b}}z^{-X_{\tau^-_{-b}}}; \tau^-_{-b} < \tau^+_a, J_{\tau^-_{-b}}\Bigr) =z^{b-1} \bigl[\Z_v(z, b-1)-\W_v(b)\W_v(a+b)^{-1}\Z_v(z, a+b-1)\bigr],$}
	\end{equation*}\end{linenomath*}
and
\begin{linenomath*}\begin{equation}
\label{OneSideDown}
\mathbb{E}\Bigl(v^{\tau^-_{-b}} z^{-X_{{\tau^-_{-b}}}}; J_{_{\tau^-_{-b}}} \Bigr) = z^{b-1}\Bigl[\Z _v(z, b-1) - z \W_v(b)\widetilde{\bold{L}}_v^{-1}\bigl(\bold{I} -z \G_v^{-1}\bigr)^{-1}\widetilde{\bold{L}}_v(\F_v(z) - \bold{I} )\Bigr] .
\end{equation}\end{linenomath*}
where $\W_v(\cdot)$ and $\Z_v(z, \cdot)$ are defined as in Eqs.\,\eqref{eqW1} and \eqref{eqZMat}, respectively.
\end{Remark}

\section*{Acknowledgments}
This work is partially supported by the National Science Centre under the grant\linebreak
2021/41/B/HS4/00599 (2022-2026).
The authors are grateful to the anonymous referees for their constructive comments and suggestions that have improved the content and presentation of this paper.


\section*{Conflict of interest statement}
The authors have no conflicts of interest to declare that are relevant to the content of this article.

\section*{Data availability statement}
Data sharing not applicable to this article as no datasets were generated or analyzed during the
current study.



\section*{Appendix}
\renewcommand{\theequation}{A.\arabic{equation}}
\setcounter{equation}{1}

\begin{proof}[\textbf{Proof of Eq.\,\eqref{eqMTrans}}.]
	It follows from the results of Eq.\,\eqref{eqM1} and \eqref{eqM2}, that
	\begin{linenomath*}\begin{align*}
			\sum_{n=0}^\infty z^n\widetilde{\bold{M}}(n) &= \widetilde{\bold{M}}(0)+\sum_{n=1}^\infty z^n	\sum_{m=-1}^n\widetilde{\bold{A}}_{-m} \widetilde{\bold{M}}(n-m)\\ &=\bold{I}+\sum_{n=0}^\infty z^n	\sum_{m=-1}^n\widetilde{\bold{A}}_{-m} \widetilde{\bold{M}}(n-m) \\
			&= \bold{I}+\sum_{n=0}^\infty z^n	\sum_{k=0}^{n+1}\widetilde{\bold{A}}_{-(n-k)} \widetilde{\bold{M}}(k)\\
			&= \bold{I}+\sum_{n=0}^\infty z^n	\widetilde{\bold{A}}_{-n} \widetilde{\bold{M}}(0)+\sum_{n=0}^\infty z^n	\sum_{k=1}^{n+1}\widetilde{\bold{A}}_{-(n-k)} \widetilde{\bold{M}}(k)\\
			&= \bold{I}+\sum_{n=0}^\infty z^n	\widetilde{\bold{A}}_{-n} \widetilde{\bold{M}}(0)+\sum_{k=1}^\infty z^k	\sum_{n=k-1}^{\infty}z^{n-k}\widetilde{\bold{A}}_{-(n-k)} \widetilde{\bold{M}}(k)\\
			&=\bold{I}+\sum_{n=0}^\infty z^n	\widetilde{\bold{A}}_{-n} \widetilde{\bold{M}}(0)+  \sum_{i=-1}^{\infty}z^i \widetilde{\bold{A}}_{-i}\sum_{k=1}^\infty z^k	 \widetilde{\bold{M}}(k)\\
			&=\bold{I}-z^{-1}\widetilde{\bold{A}}_1\widetilde{\bold{M}}(0) +\F(z)\sum_{k=0}^\infty z^k	 \widetilde{\bold{M}}(k),
	\end{align*}\end{linenomath*}
where, in the last equality, we have used the series definition of $\F(z)$ given in Eq.\,\eqref{eqFMat}.
\end{proof}
\begin{proof}[\textbf{Proof of Eq.\,\eqref{eqC1.1}}.]
To prove Eq.\,\eqref{eqC1.1}, first note that
\begin{linenomath*}\begin{equation*}\label{Ape1}
 \sum_{n=0}^k z^n \mathbb{P}\left(\tau_{-n}<\infty, J_{\tau_{-n}} \right) \LMat=\left[ \sum_{n=0}^\infty z^n \mathbb{P}\left(\tau_{-n}<\infty, J_{\tau_{-n}} \right)- \sum_{n=k+1}^\infty z^n\mathbb{P}\left(\tau_{-n}<\infty, J_{\tau_{-n}} \right)  \right]\LMat.
\end{equation*}\end{linenomath*}
Then, solving  Eq.\,\eqref{ap4} w.r.t.\,$ \sum_{n=0}^\infty z^n \mathbb{P}\bigl(\tau_{-n}<\infty, J_{\tau_{-n}} \bigr)\widetilde{\bold{L}}$ and substituting  into the above equation, we have
\begin{linenomath*}\begin{equation}\label{Ape2}
 \sum_{n=0}^k z^n \mathbb{P}\left(\tau_{-n}<\infty, J_{\tau_{-n}} \right) \LMat=\bigl(\bold{I} - \widetilde{\bold{F}}(z) \bigr) ^{-1}+ \bigl(\bold{I}-z^{-1}\widetilde{\bold{G}}^{-1}\bigl)^{-1} \LMat\,  - \sum_{n=k+1}^\infty  z^n \mathbb{P}\left(\tau_{-n}<\infty,  J_{\tau_{-n}} \right)\LMat.\quad
\end{equation}\end{linenomath*}
Now, at this point, consider the definition of the scale matrix, $\W(n)$, given in Eq.\,\eqref{ScaleMat1}. Multiplying this expression through by $z^n$ and summing from $0$ to $k$ on both sides, gives
\begin{linenomath*}\begin{equation}
\label{Ape3}
\sum_{n=0}^k z^n \mathbb{P}\left(\tau_{-n}<\infty, J_{\tau_{-n}} \right) \LMat =  \sum_{n=0}^k z^n \G^{-n} \LMat - \sum_{n=0}^k z^n \W(n),
\end{equation}\end{linenomath*}
and thus by equating the r.h.s.\,of Eqs.\,\eqref{Ape2} and \eqref{Ape3} and re-arranging, we obtain
\begin{linenomath*}\begin{align}\label{Ape4}
\sum_{n=k+1}^\infty  z^n \mathbb{P}\left(\tau_{-n}<\infty,  J_{\tau_{-n}} \right)&=\sum_{n=0}^k z^n \W(n)\LMat^{-1}+\bigl(\bold{I} - \widetilde{\bold{F}}(z) \bigr) ^{-1}\LMat^{-1}-\sum_{n=0}^k z^n \G^{-n} \notag \\
&\hspace{5mm}+ \bigl(\bold{I}-z\widetilde{\bold{G}}^{-1}\bigl)^{-1}  \notag\\
&=\sum_{n=0}^k z^n \W(n)\LMat^{-1} +\bigl(\bold{I} - \widetilde{\bold{F}}(z) \bigr) ^{-1}\LMat^{-1} \notag \\
&\hspace{5mm} - \bigl(\bold{I}-z\widetilde{\bold{G}}^{-1}\bigl)^{-1} \bigl(\bold{I}-(z\widetilde{\bold{G}}^{-1})^{k+1}\bigl) +\bigl(\bold{I}-z\widetilde{\bold{G}}^{-1}\bigl)^{-1}  \notag \\
&=\sum_{n=0}^k z^n \W(n)\LMat^{-1}+\bigl(\bold{I} - \widetilde{\bold{F}}(z) \bigr) ^{-1}\LMat^{-1} \notag \\
& \hspace{5mm}+ \bigl(\bold{I}-z\widetilde{\bold{G}}^{-1}\bigl)^{-1} (z\widetilde{\bold{G}}^{-1})^{k+1},
\end{align}\end{linenomath*}
which provides an expression for the second term of Eq.\,\eqref{eqC1}.  Thus, letting $k=d+x$ in the above expression and substituting into Eq.\,\eqref{eqC1}, we have that
\begin{linenomath*}\begin{align}\label{Ape5}
\sum_{m \in \mathbb{Z}} z^{-m} \widetilde{\bold{L}}^*_x(m, \infty) &= \Bigl[-\bigl( \bold{I} - z\widetilde{\bold{G}}^{-1}\bigr)^{-1}\widetilde{\bold{G}}^{-x}+  z^{-(d+1) - x} \Bigg(\sum_{n=0}^{d+x} z^n \W(n)\LMat^{-1}+\bigl(\bold{I} - \widetilde{\bold{F}}(z) \bigr) ^{-1}\LMat^{-1}
  \notag \\
 &\quad + \bigl(\bold{I}-z\widetilde{\bold{G}}^{-1}\bigl)^{-1} (z\widetilde{\bold{G}}^{-1})^{d+x+1}\Bigg)\widetilde{\bold{G}}^{d+1} \Bigr]\left(\bold{I} - \widetilde{\bold{C}}_{-(d+1)}\right)^{-1}\notag \\
&= z^{-(d+1) - x} \left[\sum_{n=0}^{d+x} z^n \W(n)+\bigl(\bold{I} - \widetilde{\bold{F}}(z) \bigr) ^{-1}\right]\LMat^{-1}\widetilde{\bold{G}}^{d+1} \left(\bold{I} - \widetilde{\bold{C}}_{-(d+1)}\right)^{-1}
\end{align}\end{linenomath*}
Now, setting $n=d+1$ in Eq. \,\eqref{ScaleMat1} and multiplying from the right by $\LMat^{-1}\widetilde{\bold{G}}^{d+1}$, yields
\begin{linenomath*}\begin{align*}\label{Ape6}
\W(d+1)\LMat^{-1}\widetilde{\bold{G}}^{d+1}&=\bold{I}-\mathbb{P}\big(J_{\tau_{-(d+1)}} \big)\widetilde{\bold{G}}^{d+1} \\
&=\bold{I} - \widetilde{\bold{C}}_{-(d+1)},
\end{align*}\end{linenomath*}
by the definition of $\widetilde{\bold{C}}_{-y}$ given in  Eq.\,\eqref{eqC} and thus, it follows that
\begin{linenomath*}\begin{equation*}\label{Ape7}
\left(\bold{I} - \widetilde{\bold{C}}_{-(d+1)}\right)^{-1}=\widetilde{\bold{G}}^{-(d+1)}\LMat\W(d+1)^{-1}.
\end{equation*}\end{linenomath*}
Finally, substituting the above equation into Eq.\,\eqref{Ape5}, we get that
\begin{linenomath*}\begin{align}\label{Ape8}
\sum_{m \in \mathbb{Z}} z^{-m} \widetilde{\bold{L}}^*_x(m, \infty) &= z^{-(d+1) - x} \left[\sum_{n=0}^{d+x} z^n \W(n)+\bigl(\bold{I} - \widetilde{\bold{F}}(z) \bigr) ^{-1}\right]\W(d+1)^{-1}\notag\\
&= z^{-1}z^{-(d+ x)} \left[\sum_{n=0}^{d+x} z^n \W(n)\bigl(\bold{I} - \widetilde{\bold{F}}(z) \bigr)+\bold{I}\right]\bigl(\bold{I} - \widetilde{\bold{F}}(z) \bigr) ^{-1}\W(d+1)^{-1}\notag\\
&=z^{-1}\Z(z, d+x)(\bold{I} - \F(z))^{-1} \W(d+1)^{-1}, \notag
\end{align}\end{linenomath*}
where the last equation follows from the definition of the $\Z$ scale matrix given in Eq.\,\eqref{eqZMat}.
\end{proof}


\begin{thebibliography}{99}
\addtolength{\leftmargin}{0.2in} 
\bibitem{ASM} {\textsc{S. Asmussen.}  Applied Probability and Queues.  Springer-Verlag, 2nd Edition, 2003.}
\bibitem{RUIN}{\textsc{S. Asmussen and H. Albrecher.} Ruin Probabilities. World Scientific, Volume 14, 2010.}
\bibitem{AV1} {\textsc{F. Avram and M. Vidmar.} First passage problems for upward skip-free random walks via the scale functions paradigm. \textit{Advances in Applied Probability}, 51, 408-424, 2019.}
 \bibitem{BR} {\textsc{L. Breuer.} First passage times for Markov additive processes with positive jumps of phase-type.
 \textit{Journal of Applied Probability}, 45 , 779-799, 2018.}
\bibitem{BERT} {\textsc{J. Bertoin.}  L\'evy Processes.  Cambridge University Press, 1996.}
\bibitem{BIME} {\textsc{D. Bini and B. Meini.}  On cyclic reduction applied to a class of Toeplitz-like matrices arising
in queueing problems. \textit{Computations with Markov Chains},  21-28. Kluwer Academic Publishers, 1995.}
\bibitem{BIETAL} {\textsc{D. Bini, B. Meini, and G. Latouche.}  Numerical Methods for Structured Markov Chains.
 \textit{Numerical Mathematics and Scientific Computation}, Oxford University Press, 2005.}
\bibitem{BIKM} {\textsc{B. D'Auria, J. Ivanovs, O. Kella and M. Mandjes.} First Passage of a Markov Additive Process and Generalized Jordan Chains.
 \textit{Journal of Applied Probability}, 47(4), 1048-1057, 2016.}
\bibitem{DM} {\textsc{ A.B. Dieker and  M. Mandjes.} Extremes of Markov-additive processes with one-sided jumps, with queueing
applications. \textit{Methodology and Computing in Applied Probability},  13 (2),  221-267,  2009}
\bibitem{FEL} {\textsc{W. Feller.} An Introduction to Probability Theory and its Applications, Vol. II, 	John Wiley, 1971.}
\bibitem{GAIL} {\textsc{H.R. Gail, S.L.  Hantler  and  B.A. Taylor} Spectral analysis of M/G/1 and G/M/1 type Markov chains. \textit{Advances in Applied Probability}, 28(1), 114-165, 1996.}
\bibitem{GAIL1} {\textsc{H.R. Gail, S.L.  Hantler  and  B.A. Taylor} Solutions of the basic matrix equation for M/G/1 and G/M/1 type Markov chains. \textit{Comm. Statist. Stochastic models}, 10(1), 1-43, 1994.}
\bibitem{MJ} {\textsc{M. Jacobsen.} Exit times for a class of random walks: Exact and distribution results. \textit{Journal of Applied Probability}, 48A , 51-63, 2011.}
\bibitem{IVAN3} {\textsc{J. Ivanovs.} One-sided Markov additive processes and related exit problems. \textit{Uitgeverij  BOX Press, Oisterwijk},  2007.}
\bibitem{IVAN1} {\textsc{J. Ivanovs and Z. Palmowski.} Occupation densities in solving exit problems for
Markov additive processes and their reflections. \textit{Stochastic Processes and their Applications}, 122 , 3342-3360, 2012.}
\bibitem{IVAN2} {\textsc{J. Ivanovs,  G. Latouche, G., and P. Taylor.} One-sided Markov additive processes with lattice and non-lattice increments.
 \textit{Preprint}, 2019.}
 \bibitem{IVAN4} {\textsc{J. Ivanovs and M. Mandjes.} First passage of time-reversible spectrally negative Markov additive processes. \textit{Operations
Research Letters}, 38 (2),  77-81, 2010.}
\bibitem{KYPR} {\textsc{A. Kyprianou.}  Introductory Lectures on Fluctuations of L\'evy Processes with Applications. Berlin
Heidelberg: Springer, 2006.}
\bibitem{KYPAL} {\textsc{A. Kypriannou and Z. Palmowski.} Fluctuations of Markov-additive processes.  \textit{S\'eminaire de Probabilit\'es}, Lecture Notes in Mathematics  vol 1934. Springer, Berlin, 121-135, 2008.}
\bibitem{LRAM} {\textsc{G. Latouche and V. Ramaswami.} A Logarithmic Reduction Algorithm for Quasi-Birth-Death Processes.  \textit{Journal of Applied Probability}, Vol 30, No 3, 650-674, 1993.}
\bibitem{NEUT} {\textsc{M. Neuts.}  Structured Stochastic Matrices of M/G/1 Type and their Applications.  Marcel
Dekker, New York, 1989.}
\bibitem{NEUT1} {\textsc{M. Neuts.}  Matrix-Geometric Solutions in Stochastic Models: An Algorithmic Approach. The John Hopkins University Press, 1981.}
\bibitem{KRUK} {\textsc{\L. Kruk, J. Lehoczky, K. Ramanan and S. Shreve.} An explicit formula for the Skorohod map on [0,\,a]. \textit{The Annals of Probability}, 1740-1768, 2007.}



\end{thebibliography}
\end{document}